\newtheorem{proposition}{Proposition}
\newtheorem{lemma}{Lemma}
\theoremstyle{definition}
\newtheorem*{remark}{Remark}
\newtheorem{step}{\textsc{Step}}
\crefname{step}{Step}{Step}
\newtheorem{experiment}{Experiment}
\crefname{experiment}{Experiment}{Experiment}
\crefname{figure}{Figure}{Figure}
\title[Numerical reconstruction of radiative sources]{Numerical reconstruction of radiative sources in an absorbing and
non-diffusing scattering medium in two dimensions}
\author[H.~Fujiwara]{Hiroshi Fujiwara}
\address{Graduate School of Informatics,  Kyoto University, Yoshida Honmachi, Sakyo-ku, Kyoto 606-8501, Japan}
\email{fujiwara@acs.i.kyoto-u.ac.jp}
\author[K.~Sadiq]{Kamran Sadiq}
\address{Johann Radon Institute of Computational and Applied Mathematics (RICAM), Altenbergerstrasse 69, 4040 Linz, Austria}
\email{kamran.sadiq@ricam.oeaw.ac.at}
\author[A.~Tamasan]{Alexandru Tamasan}
\address{Department of Mathematics, University of Central Florida, Orlando, 32816 Florida, USA}
\email{tamasan@math.ucf.edu}
\DeclareMathSymbol{\Real}{\mathalpha}{AMSb}{"52}
\DeclareMathSymbol{\C}{\mathalpha}{AMSb}{"43}
\DeclareMathSymbol{\Z}{\mathalpha}{AMSb}{'132}
\DeclareMathOperator{\Repart}{Re}
\DeclareMathOperator{\Impart}{Im}
\DeclareMathOperator{\Arg}{Arg}
\DeclareMathOperator{\pv}{p\text{.}v\text{.}}
\DeclareMathOperator{\Div}{\boldsymbol{D}}
\DeclareMathOperator{\Hilb}{\boldsymbol{H}}
\DeclareMathOperator{\Radn}{\boldsymbol{R}}
\DeclareMathOperator{\supp}{supp}
\providecommand{\norm}[1]{\left\lVert#1\right\rVert}
\newcommand{\mut}{\mu_\text{t}}
\newcommand{\mua}{\mu_\text{a}}
\newcommand{\mus}{\mu_\text{s}}
\newcommand{\D}{D}
\newcommand{\xip}{\xi^{\perp}}
\def\sep{\:;\:}
\keywords{
transport equation, inverse problems, numerical source reconstruction,
Attenuated $X$-ray transform, Attenuated Radon transform, scattering,
$A$-analytic maps, Hilbert transform, Bukhgeim-Beltrami equation,
optical molecular imaging
}
\subjclass[2010]{Primary 65N21; Secondary 30E20.}
\begin{document}

\maketitle

\begin{abstract}
We consider the two dimensional quantitative imaging problem of
recovering a radiative source inside an absorbing and scattering
medium from knowledge of the outgoing radiation measured at the boundary. 
The medium has an anisotropic scattering property
that is neither negligible nor large enough for the diffusion approximation
to hold. We present the numerical realization of the authors' recently
proposed reconstruction method.
For scattering kernels of finite Fourier content
in the angular variable, the solution is exact. 
The feasibility of the proposed algorithms is demonstrated in several
numerical experiments, including simulated scenarios for parameters
meaningful in optical molecular imaging.
\end{abstract}

\section{Introduction}
We consider the inverse source problem for radiative transport in a bounded,
strictly convex domain $D \subset \Real^2$ with boundary $\Gamma$,
as modeled by the linearized Boltzmann equation. Let $S^1 = \set{\xi \in \Real^2 \sep |\xi| = 1}$ be the unit circle,
and $\Gamma_\pm:=\set{(x,\xi)\in \Gamma \times S^1 \sep \pm\nu(x)\cdot\xi>0 }$ be the incoming ($-$),
respectively outgoing ($+$), unit tangent sub-bundles of the boundary,
where $\nu(x)$ is the outer unit normal at $x \in \Gamma$.
  
The medium is characterized by the absorption coefficient $\mua(x)$, scattering coefficient $\mus(x)$,
and scattering kernel $p(x,\xi\cdot\xi')$, all of which are assumed known, real valued, non-negative,
essentially bounded functions. For any $x\in D$ and $\xi,\xi'\in S^1$, the scattering kernel (a conditional probability)
satisfies $\displaystyle\int_{S^1}p(x,\xi\cdot\xi')d\sigma_{\xi'} =1$,  for all $x\in D$. The total attenuation is defined as $\mut = \mua+ \mus$. 

Generated by an unknown source $q$, in the steady state case, and assuming no incoming radiation from outside the domain,
the density of particles $I(x,\xi)$ at $x\in D$ traveling in the direction $\xi$ satisfies the problem
\begin{subequations} \label{eq:forward}
\begin{gather}
\begin{split}
&\xi\cdot\nabla_x I(x,\xi) +\mut(x) I(x,\xi)  \\
&\qquad = \mus(x) \int_{S^1} p(x,\xi\cdot\xi')I(x,\xi') d\sigma_{\xi'} + q(x) , \quad (x,\xi)\in D\times S^1, \label{TransportScatEq1} \\
\end{split}
\\
I(x,\xi) \bigm|_{\Gamma_{-}} = 0, \label{no_incoming}
\end{gather} 
\end{subequations}
The unknown source $q$ is assumed square integrable and compactly supported in $D$.
Under the above assumptions on $\mu_a,\mu_s, p$, and $q$,
the forward problem \eqref{eq:forward} is well posed (\cite{dautrayLions4}),
with a unique solution $I$ in the space
\[\set{f\in L^2(D\times S^1) \sep (x,\xi)\mapsto \xi\cdot\nabla f(x,\xi)\in L^2(D\times S^1)}.\]
For well-posedness results under various other assumptions,
see \cite{choulliStefanov96, choulliStefanov99, anikonov02,mokhtar},
and the generic result in \cite{stefanovUhlmann08}. 

For a given medium, i.e., $\mua$, $\mus$ and $p$ are known, we consider the following inverse source problem:
Determine $q$ in $D$ from the measurement 
\[
I(x,\xi) = I_{\text{measure}}(x,\xi), \qquad (x,\xi)\in \Gamma_{+}
\]
of the directional outflow at the boundary. In the presence of scattering, this problem is
equivalent to inverting a smoothing operator, and therefore it is ill-posed.

When $\mua=\mus=0$, this is the classical $X$-ray tomography problem of Radon \cite{radon1917},
where $q$ is to be recovered from its integrals along lines, a problem which is well understood
both on its theoretical and numerical facets, see, e.g., \cite{nattererBook,helgasonBook,kuchmentBook}
and references therein. For $\mua\neq 0$ but $\mus=0$, this is the problem of inversion of
the Attenuated $X$-ray transform in two dimensions, solved by different methods in \cite{ABK}, and \cite{novikov01};
see \cite{natterer01,bomanStromberg,bal04,monard16, monard17,kunyansky01} for later approaches and some numerical implementation. 

The inverse source problem in a scattering media considered here, i.e., $ \mus p\neq 0$,
has also been considered under various limiting constraints (e.g., \cite{larsen75, siewert93, mccormickSanchez, hubenthal,balTamasan07})
with a most general result showing that the source is uniquely and stably determined by the outflow in \cite{stefanovUhlmann08}.
However, the numerical solutions for the inverse source problem based on the
above mentioned results have yet to be realized. 

In the special case of a weakly (anisotropic) scattering media, where $\|1- p\|\ll 1$,
one may recover the source by devising algorithms for the iterative method proposed in \cite{balTamasan07}.
However, based on a perturbation argument of the non-scattering case in \cite{novikov01},
the method does not extend to strongly anisotropic scattering media considered here.
Moreover, even in the case of a weakly scattering media, the requirement of solving one forward problem
(a computationally extensive procedure) at each iteration renders the method inefficient from an imaging perspective.

In here we present a numerical reconstruction method based on the authors' recent theoretical results
in \cite{FujiwaraSadiqTamasan}, and propose one algorithm to recover the radiative sources.
As demonstrated in the numerical experiments below, the algorithms can handle the quantitative imaging
of sources in a non-small scattering medium that is far from diffusive approximation, with applications to
Optical Molecular Imaging \cite{NW_OMI, NYS_OMI}.

Key to the reconstruction method is the realization that any finite Fourier content in the angular variable
of the scattering kernel splits the problem into a non-scattering one and a boundary value problem
for a finite elliptic system. The role of the finite Fourier content has been independently recognized in \cite{monardBalPreprint}. However, in general, the scattering kernel does have an infinite Fourier content.
Such is the case in the numerical experiments below, where we work with the ubiquitous
(two dimensional version of) the Henyey-Greenstein kernel
\begin{equation}\label{HG}
p(x,\xi\cdot\xi')=p(\xi\cdot\xi')
= \dfrac{1}{2\pi} \dfrac{1-g^2}{1-2g \xi\cdot\xi' + g^2},\quad\forall x\in D.
\end{equation}
In \eqref{HG} the parameter $0\leq g\leq 1$ models a degree of anisotropy
with $g=0$ being the isotropic case, and $g=1$ being the ballistic case.
In the proposed reconstruction we use an approximate scattering kernel obtained by truncating
the Fourier modes in the angular variable. The error estimate in the data due to such truncation
(see \Cref{sec:procedure} below) allows to interpret the reconstruction as a minimum residual solution,
where for a given a priori noise level, the degree of truncation dictates the number of significant Fourier modes.
Moreover, we devise a locally optimal criterion for the choice of the order of this truncation, which is independent of the unknown source.

The theoretical method originated in Bukhgeim's theory of $A$-analytic functions developed
in \cite{bukhgeimBook} to treat the non-attenuating case and in \cite{ABK} for the absorbing but non-scattering case,
and extends the ideas in \cite{sadiqTamasan01,sadiqTamasan02} to the scattering case.
One of the numerical results in the first experiment below has been announced in \cite{FujiwaraSadiqTamasan}.


\section{Preliminaries}\label{Sec:prelim}
In this section we establish notation, while presenting the basic ideas of the reconstruction in the non-scattering case.
The presentation follows authors' ideas in \cite{sadiqTamasan01,sadiqTamasan02,FujiwaraSadiqTamasan},
while the notation is that used by practitioners in Optical Tomography, e.g. \cite{arridge}.
We also depart from the original notation in \cite{bukhgeimBook} (used so far in \cite{sadiqTamasan01,sadiqTamasan02,FujiwaraSadiqTamasan}),
and work with the positive Fourier modes. This will allow for the natural indexing of sequences.
For the  analytical framework, which specifies the regularity of the coefficients
and proves the appropriate convergence of the ensuing series we refer to \cite{FujiwaraSadiqTamasan}.
We identify points $x = (x_1, x_2) \in \Real^2$ with their complex representative $z=x_1+ix_2 \in \C$.
The method considers the transport model  \eqref{eq:forward} in the Fourier domain of the angular variable.
For $\xi = \xi(\theta) = (\cos \theta,\sin\theta)\in S^1$, let
\begin{equation}\label{fourierI}
I(z,\xi) = \sum_{m \in \Z} I_{m}(z) e^{-im\theta}
\end{equation}
be the Fourier series representation of $I$ in the angular variable.
From the analysis of forward problem (e.g., \cite{dautrayLions4}),
for a square integrable source and essentially bounded coefficients,
the solution $I$ of \eqref{eq:forward} is at least square integrable
in the angular variable and thus the series \eqref{fourierI} is summable in $L^2$ sense.
Moreover, since $I$ is real valued, $I_{-m}=\overline{I_m}$ and the angular dependence
is completely determined by the sequence of its nonnegative Fourier modes 
\[
D \ni z\mapsto \langle I_{0}(z), I_{1}(z),I_{2}(z),... \rangle.
\]
Similarly, let 
\[
p(z,\xi\cdot\xi') = \sum_{m \in \Z} p_{m}(z) e^{-im\tau}
\]
be the Fourier series representation of the scattering kernel,
where $\tau$ is the angle formed by $\xi$ and $\xi'\in S^1$.
We assume that $\theta\mapsto p(z,\cos(\theta))$ is uniformly in $z\in D$ sufficiently smooth,
such that its Fourier coefficients $p_n$ have sufficient decay in $n$
for the convergence analysis of ensuing series; see such details
in \cite{sadiqTamasan01,FujiwaraSadiqTamasan}.
Moreover, since $p(z,\cos\tau)$ is both real valued and even in $\tau$,
$p_{m}$ are real valued and $p_m=p_{-m}$, for $m \in \Z$. 

By introducing the Cauchy-Riemann gradients in the spatial domain
$\partial = (\partial_{x_1} - i\partial_{x_2})/2$ and
$\overline{\partial} = (\partial_{x_1} + i\partial_{x_2})/2$,
the advection operator becomes $\xi \cdot\nabla_x=e^{-i\theta}\overline{\partial} + e^{i\theta}\partial$.
A projection on the basis $\set{e^{im\tau}}$ in $L^2(0,2\pi)$,
reduces the original transport equation \eqref{TransportScatEq1} to the infinite dimensional elliptic system
\[
\overline{\partial} I_{-1}(z)+\partial I_{1}(z) + \mut(z)I_{0}(z)= 2 \pi \mus(z) p_{0}(z) I_{0}(z) +q(z),
\]
and
\begin{equation}\label{infsys0}
\overline{\partial} I_{m}(z)+\partial I_{m+2}(z) + \mut(z)I_{m+1}(z)= 2 \pi \mus(z) p_{m+1}(z) I_{m+1}(z),\quad m\neq -1.
\end{equation}

In particular, in the non-attenuated and non-scattering case (when $\mut\equiv0\equiv\mus$)
the system corresponding to \eqref{infsys0} is 
\begin{equation}\label{infsys00}
\overline{\partial} J_{m}(z)+\partial J_{m+2}(z)= 0,\quad m\neq -1.
\end{equation}
The system \eqref{infsys00} was originally introduced (in a 
context more general than needed here) in \cite{bukhgeimBook},
and shown that their solutions satisfy a Cauchy like integral formula,
where the interior values in $D$ are recovered from their boundary values.
More precisely, for each $m\geq 0$, and each $z\in D$ fixed, 
\begin{equation}
J_m(z)
= \dfrac{1}{2\pi i}\int_{\partial\D}\dfrac{J_{m}(\zeta)}{\zeta-z}\:d\zeta 
+ \dfrac{1}{2\pi i}\int_{\partial\D} \left\{\dfrac{d\zeta}{\zeta-z} - \dfrac{d\overline{\zeta}}{\bar{\zeta}-\bar{z}}\right\}
  \sum_{j=1}^\infty J_{m+2j}(\zeta)\Biggl(\dfrac{\:\:\overline{\zeta}-\overline{z}\:\:}{\zeta-z}\Biggr)^j. \label{eq:BukhgeimCauchy}
\end{equation}

In the absorbing and non-scattering case ($\mus=0$ but $\mua\geq 0$)
the system \eqref{infsys0} becomes
\begin{equation}\label{infsys10}
\overline{\partial} I_{m}(z)+\partial I_{m+2}(z) + \mut(z)I_{m+1}(z)=0,\quad m\neq -1.
\end{equation}
The system \eqref{infsys10} has been studied first in \cite{ABK}
and shown that an integrating operator can be found to reduce it to \eqref{infsys00}.
We briefly describe the explicit construction of the integrating factor introduced in \cite{finch}
and its convolution form in \cite{sadiqTamasan02}
to be used in our reconstruction algorithms below.
For brevity assume that $\mua$ and $\mus$ are extended by zero outside the domain. 

For $(x,\xi) \in \Real^2\times S^1$,
let $\Div[\mut](x,\xi) = \int_0^\infty \mut(x+t\xi)\:dt$ be the divergent beam transform,
$\Radn[\mut](s,\xi) = \int_{-\infty}^{\infty} \mut(s\xi+t\xip)\:dt$ be the Radon transform,
and
$\Hilb[f](s) = \dfrac{1}{\pi}\pv\int_{-\infty}^\infty \dfrac{f(t)}{s-t}\:dt$ be the Hilbert transform,
where  $\xip \in S^1$ denotes the counterclockwise rotation of $\xi$ by $\pi/2$. Let us define
\begin{equation}\label{h}
h[\mut](x,\xi)
= \Div[\mut](x,\xi) - \dfrac{1}{2}(I-i\Hilb)\Radn[\mut](x\cdot\xip, \xip),
\end{equation}
where
$\displaystyle\Hilb\!\!\Radn[\mut](x\cdot\xip,\xip)= \Hilb\Bigl[\Radn[\mut](\cdot,\xip)\Bigr](x\cdot\xip)
= \dfrac{1}{\pi}\pv\int_{-\infty}^{\infty} \dfrac{\Radn[\mut](t,\xip)}{x\cdot\xip-t}dt$.
While there are many possible such integrating factors
to reduce the system \eqref{infsys10} to \eqref{infsys00},
the key feature of the construction in \cite{finch} is the fact
that all the negative modes of $\theta\mapsto h[\mut](x,\xi(\theta))$
(and thus of $e^{\pm h}$) vanish, as shown in \cite{finch,nattererBook, bomanStromberg}.
Let $\set{\alpha_k(x) \sep k \in \Z_{\ge 0}}$ and $\set{\beta_k(x) \sep k \in \Z_{\ge 0}}$
be the corresponding sequences of the Fourier modes, 
\begin{equation}\label{alphasbetas}
e^{-h[\mut](x,\xi)} = \sum_{k=0}^\infty \alpha_k(x) e^{ik\theta},
\quad
e^{h[\mut](x,\xi)} = \sum_{k=0}^\infty \beta_k(x) e^{ik\theta}.
\end{equation}
Then, as shown in \cite[Lemma 4.1]{sadiqTamasan02}, 
if
\begin{equation}\label{JfromI}
J_m = \sum_{k=0}^{\infty}\alpha_{k} I_{m+k},\quad m\geq 0,
\end{equation}
where $\set{I_m}$ is a solution to \eqref{infsys10}, then
$\set{J_m}$ is a solution of \eqref{infsys00}, and conversely, if 
\begin{equation}\label{IfromJ}
I_m = \sum_{k=0}^{\infty}\beta_{k} J_{m+k},
\end{equation}
where $\set{J_m}$ solves \eqref{infsys00}, 
then $\set{I_m}$ is a solution of \eqref{infsys10}.
Moreover, the Cauchy problem for \eqref{infsys00} (and thus for \eqref{infsys10}) has at most one solution.

\section{Reconstruction in the presence of scattering}\label{sec:procedure}
We consider now the scattering case, when the scattering kernel $p$ is of polynomial 
type in the angular variable, i.e.,
\[
p(z, \cos\theta)=\sum_{k=-M}^{M} p_k(z)e^{ik\theta}=p_0(z)+2\sum_{k=1}^M p_k(z)\cos(k\theta),
\]
where $M$ is the degree of the polynomial.
We stress that no smallness is assumed on the Fourier modes $p_k$, for $k=0,...,M$.

In this case the transport equation \eqref{TransportScatEq1} reduces to the system
\begin{subequations}
\begin{alignat}{2}
\overline{\partial} I^{(M)}_{-1} + \partial I^{(M)}_1 + \mut I^{(M)}_0 &= 2\pi \mus p_0 I^{(M)}_0 + q, & & \label{eq:reconst} \\
\overline{\partial} I^{(M)}_{m} + \partial I^{(M)}_{m+2} + \mut I^{(M)}_{m+1} &= 2\pi \mus p_{m+1} I^{(M)}_{m+1}, &\qquad& 0 \leq m \leq M-1, \label{eq:prepoisson}\\
\overline{\partial} I^{(M)}_{m} + \partial I^{(M)}_{m+2} + \mut I^{(M)}_{m+1} &= 0, &\qquad& m \ge M. \label{eq:prel2analytic}
\end{alignat}
\end{subequations}

The basic idea in the reconstruction starts from the observation
that the system \eqref{eq:prel2analytic} is of the type \eqref{infsys10}.
Therefore, via the integrating formulas \eqref{JfromI} for $m\geq M$,
the problem reduces to finding $\set{J_m^{(M)}}_{m\geq M}$ solution
of the Cauchy problem for the elliptic system \eqref{infsys00}.
Moreover, from the data on the boundary we can recover for each $m\geq M$,
\[
I^{(M)}_{m}(\zeta) = \dfrac{1}{2\pi} \int_0^{2\pi} I_{\text{measure}}\bigl(\zeta,\xi(\theta)\bigr) e^{im\theta}\:d\theta, \quad\zeta\in\Gamma, 
\]
and by using \eqref{JfromI} we find the boundary data 
\[
J^{(M)}_m\bigm|_{\Gamma} = \sum_{k=0}^\infty \alpha_k I_{k+m}\bigm|_\Gamma, \quad m\geq M.
\]

Next we use the Cauchy-like integral formula \eqref{eq:BukhgeimCauchy} to recover the interior values 
$\set{J_m(z) \sep z\in D}$ for $m\geq M$. 
Finally, we use \eqref{IfromJ} to recover the interior values $\set{I_m(z) \sep z\in D}$,
\[
I^{(M)}_m\bigm|_{\Gamma} = \sum_{k=0}^\infty \beta_k J_{k+m}\bigm|_\Gamma, \quad m\geq M.
\]

Recursively and in the decreasing order starting with the index $m=M-1$ to $m=0$,
we solve the elliptic problems \eqref{eq:prepoisson} as follows.
By applying $\partial$ to \eqref{eq:prepoisson}, we are lead to solving
\begin{subequations}\label{eq:poisson}
\begin{alignat}{2}
\triangle I^{(M)}_{m} &= 4\partial\left\{-\partial I^{(M)}_{m+2} + (2\pi \mus p_{m+1}-\mut) I^{(M)}_{m+1}\right\}, &\quad& \text{in $\D$}, \label{eq:poisson:eq}
\intertext{while on the boundary}
I^{(M)}_{m}(\zeta) &= \dfrac{1}{2\pi} \int_0^{2\pi} I_{\text{measure}}\bigl(\zeta,\xi(\theta)\bigr) e^{im\theta}\:d\theta, &\quad& \zeta\in\partial\D. \label{eq:poisson:bd}
\end{alignat}
\end{subequations}
for $0 \leq m \leq M-1$.
This is the boundary value problem of the Poisson equation in $D$.
Since $I_M^{(M)}\in H^1(D)$ the right hand side of \eqref{eq:poisson:eq} lies in $H^{-1}(D)$.
Since the trace at the boundary is in $H^{1/2}(\Gamma)$ the unique solution $I^{(M)}_{M-1} \in H^1(D)$,
and thus the regularity requirement needed to carry the argument to the next index down is satisfied.
Recursively, we recovered $I^{(M)}_{M-1}$, $I^{(M)}_{M-2}$, $\dotsc$, and $I^{(M)}_{0}$ in $D$.

Finally, since $I^{(M)}_{-1} = \overline{I^{(M)}_1}$, from the recovered $I_1^{(M)}$ and $I_0^{(M)}$,
we can now use \eqref{eq:reconst} to reconstruct the unknown source $q$ in $D$.

\begin{remark}
In general, the scattering kernel $p$ is not of polynomial type.
An immediate application of the well posedness of the forward problem in $L^2(D\times S^1)$
gives an error estimate in the measured outflow due to the approximation in the scattering kernel. 
\end{remark}

\begin{proposition}
Let $\mut,\mus, p$ be such that the forward problem \eqref{eq:forward} has
a unique solution and let $\tilde{p}$ be such that
\[
\int_{D\times S^1}\lvert p(x,\xi\cdot\xi')-\tilde{p}(x,\xi\cdot\xi')\rvert^2\:d\sigma_\xi\:dx\leq\epsilon^2,
\]
for some $0<\epsilon<1$, and let $\tilde{I}$ be the unique solution for \eqref{eq:forward} corresponding to $\tilde{p}$. Then,
\begin{equation}\label{dataNoise}
\norm{I|_{\Gamma_{+}}-\tilde{I}|_{\Gamma_{+}}} \leq C\epsilon,
\end{equation}
where $C>0$ is a constant depending only on $\mut,\mus, p$ and the domain.
\end{proposition}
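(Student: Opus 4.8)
The plan is to reduce \eqref{dataNoise} to the already-available boundedness of the source-to-outflow map for the fixed, \emph{unperturbed} medium. First I would set $w := I - \tilde I$ and subtract the two copies of \eqref{TransportScatEq1}. Since $I$ and $\tilde I$ share the same source $q$ and the same homogeneous inflow condition \eqref{no_incoming}, and since $\mus\int p I - \mus\int \tilde p\,\tilde I = \mus\int p\,(I-\tilde I) + \mus\int(p-\tilde p)\,\tilde I$, the difference $w$ solves the transport problem built from the \emph{original} kernel $p$,
\begin{equation*}
\xi\cdot\nabla_x w + \mut w - \mus\int_{S^1} p(x,\xi\cdot\xi')\,w(x,\xi')\,d\sigma_{\xi'} = F(x,\xi),\qquad w|_{\Gamma_-}=0,
\end{equation*}
with the new source $F(x,\xi) := \mus(x)\int_{S^1}\bigl(p-\tilde p\bigr)(x,\xi\cdot\xi')\,\tilde I(x,\xi')\,d\sigma_{\xi'}$. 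The essential point is that $w$ is governed by the operator containing $p$ and not $\tilde p$; this is precisely what will make the final constant depend on $p$ alone.

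Second, I would invoke the well-posedness of \eqref{eq:forward} in $L^2(\D\times S^1)$ (\cite{dautrayLions4}): the map sending an interior source to the outgoing boundary trace on $\Gamma_+$ is bounded, whence $\norm{w|_{\Gamma_+}} \le C_1\,\norm{F}_{L^2(\D\times S^1)}$ with $C_1$ depending only on $\mut,\mus,p$ and $\D$. This is the single place where the forward theory enters, and it is what the preceding remark alludes to.

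Third, I would estimate $\norm{F}$. Writing $\delta := p-\tilde p$ and applying Cauchy--Schwarz in $\xi'$ gives
\begin{equation*}
\int_{S^1}|F(x,\xi)|^2\, d\sigma_\xi \;\le\; \mus(x)^2\,\norm{\tilde I(x,\cdot)}_{L^2(S^1)}^2\int_{S^1}\!\!\int_{S^1}\bigl|\delta(x,\xi\cdot\xi')\bigr|^2\, d\sigma_{\xi'}\, d\sigma_\xi .
\end{equation*}
Because $\delta(x,\cdot)$ depends on the two directions only through $\xi\cdot\xi'$, rotation invariance collapses the double angular integral to $2\pi\,\norm{\delta(x,\cdot)}_{L^2(S^1)}^2$. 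Integrating in $x$ and using the hypothesis $\int_\D\norm{\delta(x,\cdot)}_{L^2(S^1)}^2\, dx\le\epsilon^2$ then gives $\norm{F}\le C_2\,\epsilon$, which combined with the second step yields \eqref{dataNoise}.

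The step I expect to be the main obstacle is this last estimate, specifically controlling $\tilde I$ by a quantity independent of $\tilde p$. Pulling $\norm{\tilde I(x,\cdot)}_{L^2(S^1)}$ out of the $x$-integral requires a bound in $L^\infty(\D;L^2(S^1))$; equivalently, one must read the convolution operator $f\mapsto\mus\int_{S^1}\delta(\cdot,\xi\cdot\xi')f\,d\sigma$ as bounded on $L^2(\D\times S^1)$ with norm at most $\sqrt{2\pi}\,\lVert\mus\rVert_\infty\,\mathrm{ess\,sup}_x\norm{\delta(x,\cdot)}_{L^2(S^1)}$. The cleanest route is to note that the well-posedness for $\tilde p$ gives $\norm{\tilde I}\le C\norm{q}$ with a constant that, for $\epsilon<1$, stays within a fixed multiple of the one for $p$, so that the residual dependence on $\tilde p$ is removed and is absorbed, together with $\norm{q}$, into $C$. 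If one instead measures the kernel perturbation in $L^\infty_x L^2_\xi$, then $\norm{F}\le\sqrt{2\pi}\,\lVert\mus\rVert_\infty\,\epsilon\,\norm{\tilde I}_{L^2(\D\times S^1)}$ is immediate and this difficulty disappears.
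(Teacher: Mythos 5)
Your proposal is correct and rests on the same two pillars as the paper's proof---writing the difference of the two solutions as a transport problem in which the kernel perturbation acts as an effective source, then invoking the boundedness of the source-to-outflow map together with Cauchy--Schwarz---but you split the scattering term the opposite way. The paper writes $\mus\int p\,I - \mus\int \tilde p\,\tilde I = \mus\int \tilde p\,h + \mus\int (p-\tilde p)I$ with $h=I-\tilde I$, so that $h$ solves the transport problem for the \emph{perturbed} kernel $\tilde p$ with source $\mus\int_{S^1}(p-\tilde p)I\,d\sigma_{\xi'}$; you instead keep the original $p$ in the operator and place $\tilde I$ in the source term $F=\mus\int_{S^1}(p-\tilde p)\tilde I\,d\sigma_{\xi'}$. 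Each choice buys one half of the required uniformity and owes the other: in your version the forward-estimate constant manifestly depends only on $\mut,\mus,p$ and $\D$, but you must bound $\norm{\tilde I}$ uniformly over all admissible $\tilde p$, which you handle with the perturbation remark (for $\epsilon<1$ the well-posedness constant for $\tilde p$ stays within a fixed multiple of that for $p$); in the paper's version the source is controlled by $\norm{I}\le C\norm{q}$ with $C$ depending on $p$ alone, but the a priori estimate of \cite[Theorem 2.1]{stefanovUhlmann08} is applied to the $\tilde p$-medium, so exactly the same uniformity in $\tilde p$ is used tacitly---your explicit treatment of this point is, if anything, the more careful of the two. You also correctly identify the mixed-norm subtlety in the final Cauchy--Schwarz step: with the hypothesis on $\delta:=p-\tilde p$ given only in $L^2_x L^2_\xi$, the integral $\int_D \norm{\tilde I(x,\cdot)}_{L^2(S^1)}^2\,\norm{\delta(x,\cdot)}_{L^2(S^1)}^2\,dx$ cannot be factored without one of the two functions lying in $L^\infty$ in $x$; the paper's one-line bound $\sup_D\mus\,\norm{I}_{L^2(D\times S^1)}\norm{p-\tilde p}_{L^2(D\times S^1)}$ elides this. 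In the application the paper has in mind the kernels (Henyey--Greenstein and its truncations) are independent of $x$, so $\norm{\delta(x,\cdot)}_{L^2(S^1)}$ is constant in $x$ and the issue disappears, while your alternative of stating the hypothesis in $L^\infty_x L^2_\xi$ repairs it in full generality.
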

\begin{proof}
It is easy to see that the difference of the two corresponding solutions $h=I-\tilde{I}$ satisfy
\begin{align*}
\xi\cdot\nabla_x h +\mut h &= \mus \int_{S^1} \tilde{p}h \;d\sigma_{\xi'} +\mus\int_{S^1}(\tilde{p}-p)I\; d\sigma_{\xi'},\\
h \bigm|_{\Gamma_{-}} &= 0,
\end{align*}
By interpreting the last term as a source and using the classical estimates in the forward model; see, e.g., \cite[Theorem 2.1]{stefanovUhlmann08}, we obtain
\begin{align*}
\norm{I|_{\Gamma_{+}}-\tilde{I}|_{\Gamma_{+}}}_{L^2(\Gamma_{+})} &\leq\sup_D \mus \norm{\int_{D\times S^1}\vert(\tilde{p}-p)I  \rvert \; d\sigma_{\xi'} dx}\\
&\leq\sup_D\mus \norm{I}_{L^2(D\times S^1)} \norm{p-\tilde{p}}_{L^2(D\times S^1)}\leq C\epsilon.
\end{align*}	
	
\end{proof}

For the two dimensional Henyey-Greenstein kernel \eqref{HG}, with $0\leq g<1$, considered in the numerical simulations
\[
p(\xi\cdot\xi')
= \dfrac{1}{2\pi} \dfrac{1-g^2}{1-2g \xi\cdot\xi' + g^2}
= \dfrac{1}{2\pi}\left(\sum_{m\in\Z} g^{|m|} e^{im\theta}\right)
\]
and its $M$-th order truncation
\begin{equation}\label{fourierExpHG}
p^{(M)}(\xi \cdot\xi')
= \dfrac{1}{2\pi}\left(\sum_{m\leq M} g^{|m|} e^{im\theta}\right),
\end{equation}
one obtains a refined estimate in terms of the anisotropic parameter $g$. Namely,
\begin{align*}
\int_{D\times S^1}\lvert p(\xi\cdot\xi')-p^{(M)}(\xi\cdot\xi')\rvert^2d\sigma_\xi dx
&=\pi\int_{S^1}\lvert p(\xi\cdot\xi')-p^{(M)}(\xi\cdot\xi')\rvert^2d\sigma_\xi
\\&=\sum_{m=M+1}^\infty g^{2m}=\frac{g^{2M+2}}{1-g^2},
\end{align*}
where the second last equality uses Parseval's identity.

For a given level of noise, a sufficiently large choice of $M$ yields that the difference \eqref{dataNoise}
between the exact data and the hypothetical data
(which would be obtained had the scattering been of polynomial type)
falls under the noise level.
Therefore our reconstruction produces a source for which the corresponding boundary data
is indistinguishable from the exact data within the level of noise.
This is the most we can hope to reconstruct. It is worth noting that,
for the scattering kernels of polynomial type, the method above does produce
the exact solution in a stable manner, as shown by the authors in \cite[Corollary 6.1]{FujiwaraSadiqTamasan}.

\section{Numerical Implementations}

\subsection{Evaluation of the Hilbert Transform}

An accurate calculation of the Hilbert transform is one of the crucial steps in our algorithm.
This transform appears in computing the integrating factor in \eqref{h}
and its Fourier modes in the angular variable \eqref{alphasbetas}.
In order to perform a reliable numerical integration  we need to properly account for the presence of the singularity in the kernel. The next simple lemma is key to our numerical treatment of the Hilbert transform.
\begin{lemma}
Suppose that $f \in C^1_0(\Real)$ and $\supp f \subset [a,b]$.
Then
\[
\pv\int_{-\infty}^{\infty} \dfrac{f(t)}{s-t}\:dt
= \begin{cases}
-\displaystyle\int_a^b g(s,t)\:dt + f(s)\log\dfrac{s-a}{b-s}, \quad & s \in (a,b);\\
\displaystyle\int_a^b \dfrac{f(t)}{s-t}\:dt, \quad &s \notin[a,b];\\
\displaystyle\int_a^b g_a(t)\:dt, \quad & s = a;\\
\displaystyle\int_a^b g_b(t)\:dt, \quad & s = b
\end{cases}
\]
where $g$, $g_a$, $g_b$ are bounded and continuous functions defied by
\begin{align*}
g(s,t) &= \begin{cases}
\dfrac{f(s)-f(t)}{s-t}, \quad & s \neq t;\\
f'(s),                  \quad & s = t.
\end{cases}
\\
g_a(t) &= \begin{cases}
\dfrac{f(t)}{a-t}, \quad & t > a;\\
0, \quad & t = a,
\end{cases}
\\
g_b(t) &= \begin{cases}
\dfrac{f(t)}{b-t}, \quad & t < b;\\
0, \quad & t = b.
\end{cases}
\end{align*}
It means that the integrals on the right hand side is those in the sense of Riemann.
\end{lemma}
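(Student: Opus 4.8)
The plan is to evaluate the principal value integral by isolating the singularity at $t = s$ and exploiting the $C^1$ regularity of $f$ to obtain an integrable integrand plus an explicitly computable residual. The four cases in the statement correspond to the location of the evaluation point $s$ relative to the support interval $[a,b]$: interior, exterior, and the two endpoints. The case $s \notin [a,b]$ is immediate, since then $t \mapsto f(t)/(s-t)$ has no singularity on $[a,b]$ (the integrand is continuous because $f$ vanishes at and beyond the endpoints), so the principal value reduces to an ordinary Riemann integral and nothing needs to be done.

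For the interior case $s \in (a,b)$, the key algebraic step is the decomposition
\[
\frac{f(t)}{s-t} = \frac{f(t) - f(s)}{s-t} + \frac{f(s)}{s-t}.
\]
The first term is exactly $-g(s,t)$ with the removable singularity filled in by $f'(s)$ via the mean value theorem, so that $g(s,\cdot)$ is continuous on $[a,b]$ and its integral is a genuine Riemann integral. The second term carries the entire singularity, and since $f(s)$ is a constant in $t$, I would compute
\[
\pv\int_a^b \frac{f(s)}{s-t}\:dt
= f(s)\,\pv\int_a^b \frac{dt}{s-t}
= f(s)\,\lim_{\delta \to 0^+}\left(\int_a^{s-\delta} + \int_{s+\delta}^b\right)\frac{dt}{s-t}.
\]
The symmetric deletion of the $\delta$-neighborhood is precisely what makes the logarithmic divergences cancel, leaving $f(s)\log\frac{s-a}{b-s}$. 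Assembling the two pieces gives the stated interior formula. It is worth emphasizing that the principal value is needed \emph{only} for the constant-numerator term; the $g(s,t)$ term is already absolutely integrable.

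For the endpoint cases $s = a$ and $s = b$, the situation is actually better than the interior case because the singularity sits at the boundary of the support. Take $s = a$: the integrand $f(t)/(a-t)$ has its only potential singularity at $t = a$, but $f(a) = 0$ since $\supp f \subset [a,b]$ and $f \in C^1_0$, so by the mean value theorem $f(t)/(a - t) = -f(t)/(t-a)$ stays bounded as $t \to a^+$, with limiting value $-f'(a)$; defining $g_a(a) = 0$ as in the statement (or, more naturally, its continuous extension) makes the integrand continuous, and the principal value collapses to an ordinary Riemann integral. The case $s = b$ is symmetric. The main obstacle, such as it is, lies in the interior case: one must justify interchanging the limit $\delta \to 0$ with the integration and verify that the two logarithmic terms from the left and right subintervals combine correctly into the single bounded expression $\log\frac{s-a}{b-s}$ rather than diverging. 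This is routine given the explicit antiderivative $-\log|s-t|$, but it is the one place where the symmetric definition of the principal value is essential, and where care with signs and orientation of the limits is required.
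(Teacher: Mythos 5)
Your proof is correct and takes essentially the same route as the paper's: the same decomposition $f(t)=\bigl(f(t)-f(s)\bigr)+f(s)$ with the symmetric deletion cancelling the two logarithms in the interior case, and the same collapse of the principal value to an ordinary Riemann integral for $s\notin[a,b]$ and at the endpoints. The only cosmetic difference is that the paper states explicitly that $f(a)=f'(a)=0$ (forced by $f\in C^1$ and $\supp f\subset[a,b]$), which is what reconciles your continuous extension $-f'(a)$ of $g_a$ with the stated value $g_a(a)=0$.
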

\begin{proof}
Firstly, for $s \in (a,b)$, then
\begin{align*}
\pv\int_{-\infty}^\infty \dfrac{f(t)}{s-t}\:dt
&= \lim_{\epsilon\downarrow0}\biggl\{
        - \int_a^{s-\epsilon} \dfrac{f(s)-f(t)}{s-t}\:dt
        + \int_a^{s-\epsilon} \dfrac{f(s)}{s-t}\:dt
\\
&\qquad\qquad
        - \int_{s+\epsilon}^b \dfrac{f(s)-f(t)}{s-t}\:dt
        + \int_{s+\epsilon}^b \dfrac{f(s)}{s-t}\:dt \biggr\}.
\intertext{since $f\in C^1$ the function $g(s,t)$ is bounded continuous function on $\Real^2$,
and thus the calculation is followed by}
&= -\int_a^b g(s,t)\:dt + f(s)\lim_{\epsilon\downarrow0}
        \left\{\int_a^{s-\epsilon} \dfrac{dt}{s-t}
        + \int_{s+\epsilon}^b \dfrac{dt}{s-t}\right\}
\\
&= f(s)\log\dfrac{s-a}{b-s} -\int_a^b g(s,t)\:dt.
\end{align*}

Secondly, for $s \notin [a,b]$, the integrand of the transform $\dfrac{f(s)}{s-t}$ is regular
as a function of $t\in[a,b]$.

Finally, for $s=a$, then
\begin{align*}
\pv\int_{-\infty}^\infty \dfrac{f(t)}{a-t}\:dt
&= \lim_{\epsilon\downarrow0}\left(\int_{-\infty}^{a-\epsilon}\dfrac{f(t)}{a-t}\:dt
                             +\int_{a+\epsilon}^{\infty}\dfrac{f(t)}{a-t}\:dt\right)
\\
&= \lim_{\epsilon\downarrow0}\int_{a+\epsilon}^{\infty}\dfrac{f(t)}{a-t}\:dt,
\end{align*}
since $f(t)\equiv 0$ for $t < a-\epsilon$.
By virtue of $f\in C^1$ and $f(a) = f'(a) = 0$, $g_a$ is bounded and continuous on the interval $ t \ge a$.
Similar consideration works for the case $s=b$, which completes the proof.
\end{proof}

Another choice of non-zero interval of $f$ may give a different expression.
For example, if we adopt $[a',b'] \supset [a,b]$, then
\[
\pv\int_{-\infty}^\infty \dfrac{f(t)}{s-t}\:dt
= f(s)\log\dfrac{s-a'}{b'-s} -\int_{a'}^{b'} g(s,t)\:dt.
\]
It is easily seen by calculation that both expressions are equivalent.
Therefore we can theoretically choose any non-zero interval to evaluate
the Hilbert transform as the Riemann integral of bounded and continuous functions.

For the case of $a < s < b$, we split the interval at $s$ in numerical computation:
\[
\pv\int_{-\infty}^\infty \dfrac{f(t)}{s-t}\:dt
= f(s)\log\dfrac{s-a}{b-s} - \int_a^s g(s,t)\:dt - \int_s^b g(s,t)\:dt.
\]
because $g(s,t)$ may not be smooth on $t=s$.
It is also convenient to employ the mid-point rule
in order to avoid implementation of $f'$ appeared in $g$.

\subsection{Computation of Cauchy-type Integral Formula}\label{sec:ComplexIntegral}
Assume that $\partial\D$ has a parameterization $\zeta(\omega)$, $0 \leq \omega < 2\pi$
and $D$ contains the origin for simplicity.
Suppose that $0 = \omega_0 < \omega_1 < \dotsb < \omega_{K-1} < \omega_K = 2\pi$.
Take $\zeta_k \in \partial\D$ with $\Arg\zeta_k \in [\omega_{k}, \omega_{k+1})$,
where $\Arg\zeta \in [0,2\pi)$ is the argument of $\zeta \in \C$.
Let us consider the discretization of the complex integral of
an integrable function
$f(\zeta) = \displaystyle\sum_{k=0}^{K-1} f_k \Psi_k(\zeta)$
on $\partial\D$,
which is
\[
\int_{\partial\D}f(\zeta)\:d\zeta
= \int_0^{2\pi} f\bigl(\zeta(\omega)\bigr)\zeta'(\omega)\:d\omega
= \sum_{k=0}^{K-1} f_k \int_0^{2\pi} \Psi_k\circ\zeta(\omega)\zeta'(\omega)\:d\omega.
\]
We consider two examples as $\Psi_k$.
If $\Psi_k\circ\zeta$ is the characteristic function of the interval $[\omega_k,\omega_{k+1}]$, 
then the integral is approximated by
\begin{equation}\label{eq:RiemannSum}
\int_{\partial\D}f(\zeta)\:d\zeta
= \sum_{k=0}^{K-1} f_k \int_{\omega_k}^{\omega_{k+1}} \zeta'(\omega)\:d\omega
\approx \sum_{k=0}^{K-1} f_k \zeta'_k(\omega_{k+1} - \omega_{k}),
\end{equation}
where $\zeta'_k$ is the derivative at a certain point in the interval $[\omega_k,\omega_{k+1})$.
On the other hand, 
if one can choose 
$\Psi_k\circ\zeta$ be the continuous and piecewise-linear function
with $\Psi_k\circ\zeta(\omega_{\ell}) = \delta_{k\ell}$ (Kronecker's delta) and $\omega_k = \Arg\zeta_k$,
then the trapezoidal rule gives an approximation as
\[
\int_{\partial\D}f(\zeta)\:d\zeta
= \sum_{k=0}^{K-1} f_k \int_{\omega_{k-1}}^{\omega_{k+1}} \Psi_k\bigl(\zeta(\omega)\bigr)\zeta'(\omega)\:d\omega
\approx \sum_{k=0}^{K-1} f_k \zeta'_k\dfrac{\Arg\zeta_{k+1} - \Arg\zeta_{k-1}}{2},
\]
where $\zeta'_k = \zeta'(\omega_k)$, $\zeta_{-1} = \zeta_{K-1} - 2\pi$ and $\zeta_K = \zeta_0 + 2\pi$ by virtue of the periodicity.


For the purpose of numerical computation,
we express
the integral in the second term on the right hand side of \eqref{eq:BukhgeimCauchy} as
\[
\dfrac{1}{2\pi i}\int_{\partial\D} \left(\dfrac{d\zeta}{\zeta-z} - \dfrac{d\overline{\zeta}}{\overline{\zeta}-\overline{z}}\right)F(\zeta)
= \dfrac{1}{\pi}\int_{0}^{2\pi} \Impart\left(\dfrac{\zeta'(\omega)}{\zeta(\omega)-z}\right)F\bigl(\zeta(\omega)\bigr)\:d\omega,
\]
with $F$ being the series in \eqref{eq:BukhgeimCauchy}.

\subsection{Proposed Algorithm}
In this subsection, we present the numerical algorithm for the source reconstruction.

Suppose that $\mua$, $\mus$, and $p$ on $\D$ are known.
Assume that $\partial\D$ has a smooth parameterization $\zeta(\omega)$,
$0 \leq \omega < 2\pi$.
Without loss of generality, we can assume that the domain $D$ contains the origin.
$I_{\text{measure}}(\zeta,\xi)$ on $\Gamma_{+}$ are
sampled at $\bigl(\zeta_k, \xi(\theta_n)\bigr)$, where
$\zeta_0$, $\dotsc$, $\zeta_{K-1}$ are $K$ distinct points on $\partial\D$,
and $\theta_n = 2\pi n/N$.
We assume that $0 \leq \Arg\zeta_0 < \Arg\zeta_1 < \dotsb < \Arg\zeta_{K-1} < 2\pi$.

\begin{step}
Fix positive integers $M$ and $S$.
The integer $S$ should be chosen sufficiently large, at least $S \ge M+3$.
Let 
$\zeta_k' = \zeta'(\Arg\zeta_k)$, $0 \leq k < K$.
Choose the segmentation $0=\omega_0 < \omega_1 < \dotsb < \omega_{K} =2\pi$ so that
$\Arg \zeta_k \in [\omega_k, \omega_{k+1})$.
From the periodicity we assert that $\omega_{j+K} = \omega_{j}+2\pi$, $j \in \Z$.
We introduce an inscribed polygonal domain $D_h \approx \D$, and 
take a triangulation $\mathcal{T} = \set{\tau_{\ell}}$ of $\D_h$,
i.e.\ each $\tau_{\ell}$ is a triangular domain, $\tau_{\ell}\cap\tau_k = \emptyset$ if $\ell\neq k$,
and $\overline{D_h} = \displaystyle\bigcup_{\ell}\overline{\tau_{\ell}}$.
Let $P_1(\mathcal{T})$ denote the set of the piecewise linear continuous functions
with respect to $\mathcal{T}$.
We denote by $V$ the set of vertices of $\mathcal{T}$.
\end{step}


\begin{step}\label{step:IntegratingFactor}
Compute
\begin{align*}
\alpha_{s,k} &= \dfrac{1}{N}\sum_{n=0}^{N-1} \exp\Bigl(-h[\mut](\zeta_k,\xi(\theta_n))\Bigr) e^{-is\theta_n},
\intertext{and}
\beta_{s,\ell} &= \dfrac{1}{N}\sum_{n=0}^{N-1} \exp\Bigl(h[\mut](z_\ell,\xi(\theta_n))\Bigr) e^{-is\theta_n},
\end{align*}
for $0 \leq s \leq S$, $0 \leq k < K$, and $z_{\ell} \in V$.
The function $h[\mut]$ is evaluated by the use of mid-point rule as stated so far.
\end{step}

\begin{step}\label{step:bd}
Compute
\[
\mathcal{I}_{m,k} = \dfrac{1}{N} \sum_{n=0}^{N-1} I_{\text{measure}}\bigl(\zeta_k,\xi(\theta_n)\bigr) e^{im\theta_n},
\]
for $0 \leq m \leq S$ and $0 \leq k < K$.
\end{step}

\begin{step}\label{step:convolution}
Compute
\[
\mathcal{J}_{m,k}
= \sum_{0 \leq s,s+m \leq S} \alpha_{s,k} \mathcal{I}_{s+m,k},
\]
for $M \leq m \leq S$ and $0 \leq k < K$.
\end{step}

\begin{step}\label{step:BoundaryIntegral}
Compute $\mathcal{J}_{m}(z_{\ell})$ for $M \leq m \leq S-2$ and $z_{\ell} \in V\cap\D$,
where
\begin{multline}\label{eq:BukhgeimCauchyNum}
\mathcal{J}_{m}(z) = \dfrac{1}{2\pi i} \sum_{k=0}^{K-1}\dfrac{\zeta'_k}{\zeta_k - z}\mathcal{J}_{m,k}\Delta\omega_k\\
 + \dfrac{1}{\pi} \sum_{k=0}^{K-1}\left\{\Impart \left(\dfrac{\zeta'_k}{\zeta_k - z}\right)\right\}
\left\{
  \sum_{m+2 \leq m+2j \leq S} \mathcal{J}_{m+2j,k}\Biggl( \dfrac{\:\overline{\zeta_k} - \overline{z}\:}{\zeta_k - z} \Biggr)^j
\right\}\Delta\omega_k,
\end{multline}
with $\Delta\omega_k = \omega_{k+1}-\omega_{k}$,
implied by \eqref{eq:RiemannSum}.
We can change it to
$\Delta\omega_k = (\Arg\zeta_{k+1}-\Arg\zeta_{k-1})/2$
if the piecewise-linear approximation to $J_m\bigm|_{\D}$ is valid.
\end{step}

\begin{step}\label{step:interpolationJboundary}
Compute $\mathcal{J}_{m}(\zeta_{\ell})$ for $M \leq m \leq S$ and $\zeta_{\ell} \in V\cap\partial\D$
by interpolating $\set{\mathcal{J}_{m,k} \sep 0 \leq k < K }$ obtained in \cref{step:convolution}.
\end{step}

\begin{step}\label{step:decomvolution}
For $z_{\ell} \in V$, compute
\begin{align*}
\mathcal{I}_{M}(z_{\ell}) &= \sum_{0 \leq s, s+M \leq S-2} \beta_{s,\ell} \mathcal{J}_{s+M}(z_{\ell}),
\intertext{and}
\mathcal{I}_{M+1}(z_{\ell}) &= \sum_{0 \leq s, s+M+1 \leq S-2} \beta_{s,\ell} \mathcal{J}_{s+M+1}(z_{\ell}).
\end{align*}
\end{step}

\begin{step}\label{step:boundary}
For $m=M+1, M, \dotsc, 1,0$ (in descending order),
find a piecewise linear continuous function
$\mathcal{I}_{m}|_{\partial\D} = \sum a_{\ell} \varphi_{\ell}$
by interpolating $\set{\mathcal{I}_{m,k} \sep 0 \leq k < K }$ obtained in \cref{step:bd},
where $\varphi_{\ell}$ is a periodic and piecewise linear continuous function on $\partial\D$
with $\varphi_{\ell}(v_k) = \delta_{{\ell}k}$ (Kronecker's delta), $v_k\in V\cap\partial\D$.
A more detailed example follows the algorithm description.
\end{step}

\begin{step}
For $m=M-1, M-2, \dotsc, 1,0$ (in descending order),
find an approximation $\mathcal{I}_{m} \in P_1(\mathcal{T})$ to $I_m \in H^1(D)$
by solving the Dirichlet problem of the Poisson equation \eqref{eq:poisson}
with the standard $P_1$ finite element method~\cite{ErnGuermond}.
The variational formulation for \eqref{eq:poisson} is
approximated as follows;
Find $\mathcal{I}_{m} \in P_1(\mathcal{T})$ with \eqref{eq:poisson:bd}
so as to satisfy
\begin{align*}
-\int_{D_h} \nabla{\mathcal{I}_{m}}\cdot\nabla\varphi\:dx
&= 4\int_{D_h}\partial\left\{-\partial \mathcal{I}_{m+2} + (2\pi \mus p_{m}-\mut) \mathcal{I}_{m+1}\right\}\varphi\:dx
\\
&= -4\sum_{\tau\in\mathcal{T}}\int_{\tau}\left\{-\partial \mathcal{I}_{m+2} + (2\pi \mus p_{m}-\mut) \mathcal{I}_{m+1}\right\}\partial\varphi\:dx,
\end{align*}
for any $\varphi\in P_1(\mathcal{T})$ with $\varphi|_{V\cap\partial\D_h} = 0$.

Firstly for $m=M-1$,
\cref{step:decomvolution} and \cref{step:boundary} give
$\mathcal{I}_{M+1}$ and $\mathcal{I}_{M}$ on the right hand side at $z_{\ell} \in V$,
which leads the interpretation $\mathcal{I}_{M+1}, \mathcal{I}_{M} \in P_1(\mathcal{T})$.
Particularly, if $\mathcal{I}_{M+1}(x)= a_{\ell}x_1 + b_{\ell}x_2 + c_{\ell}$ on a triangle $\tau_{\ell}$,
then $\partial \mathcal{I}_{M+1}|_{\tau_{\ell}} = (a_{\ell} - ib_{\ell})/2$.
Similarly for the test function $\varphi\in\mathcal{P}_1(\mathcal{T})$, we can find $\partial\varphi$.
The integration on the right hand side can be evaluated by
a Gauss-type numerical integration~\cite{ErnGuermond} on each triangle $\tau$.
Then we can obtain $\mathcal{I}_{m-1} \in P_1(\mathcal{T})$
by solving the linear system.

For $m=M-2,M-3,\dotsc,1,0$, we can find
approximations $\mathcal{I}_{m} \in P_1(\mathcal{T})$ similarly.
\end{step}

\begin{step}
For each triangle $\tau_{\ell} \in \mathcal{T}$,
let $\mathcal{I}_1|_{\tau_{\ell}} = a_{\ell} x_1 + b_{\ell} x_2 + c_{\ell}$.
Then reconstruction of $q|_{\tau_{\ell}}$ is given by \eqref{eq:reconst} as
\[
q_{\ell}
=
\Repart(a_{\ell}) + \Impart(b_{\ell})
 + \bigl\{\mut(z_{\ell})-2\pi\mus(z_{\ell})p_0(z_{\ell})\bigr\}\Repart\bigl(\mathcal{I}_0(z_{\ell})\bigr).
\]
This ends the Algorithm.
\end{step}

\bigskip

In \cref{step:boundary},
there may be a mismatch between the measurement points on the boundary
and vertices of the triangulation for reconstruction.
We solve this mismatch by interpolating the data on formers.
Below we detail an example.

Let us assume that $\sum_k \mathcal{I}_{m,k}\chi_k$
gives an interpolation of $\set{\mathcal{I}_{m,k} \sep 0 \leq k < K }$,
where $\chi_{k}$ is square integrable on $\partial\D$.
Then $a_1,\dotsc,a_L$ can be determined as its best approximation
in the sense of least square
\[
\min_{a_1,\dotsc,a_L}\norm{\sum_{\ell} a_{\ell}\varphi_{\ell} - \sum_k \mathcal{I}_{m,k}\chi_k}_{L^2(\partial\D)}.
\]
It is clear that there exists a unique minimizer.
In particular, if the $\D$ is the unit circle and the measurement points $\zeta_k$ are equi-spaced,
then minimizer satisfies the system of linear equations
\[
\dfrac{2\pi}{L}
\begin{pmatrix}
\tfrac{2}{3} & \tfrac{1}{6} & 0 & \cdots & \tfrac{1}{6} \\
\tfrac{1}{6} & \tfrac{2}{3} & \tfrac{1}{6} & \cdots & 0 \\
& \ddots & \ddots & \ddots & \\
0   & \cdots & \tfrac{1}{6} & \tfrac{2}{3} & \tfrac{1}{6} \\
\tfrac{1}{6} & 0 & \cdots & \tfrac{1}{6} & \tfrac{2}{3} \\
\end{pmatrix}
\begin{pmatrix}
a_1 \\ a_2 \\ \vdots \\ a_{L-1} \\ a_L
\end{pmatrix}
=
\begin{pmatrix}
b_1 \\ b_2 \\ \vdots \\ b_{L-1} \\ b_L
\end{pmatrix},
\]
where 
\[
b_{\ell} =\sum_{k} \mathcal{I}_{m,k} \int_{0}^{2\pi} \chi_k(\theta)\varphi_{\ell}(\theta)\:d\theta.
\]
Hence the boundary value is given by $\mathcal{I}_{m}(v_{\ell}) = a_{\ell}$.
The similar strategy is applicable for \cref{step:interpolationJboundary}.


\bigskip

\subsection{A locally optimal truncation criterion}
We give a criteria on the choice of $M$.
In the proposed algorithm,
$\mathcal{I}_0$ is obtained in complex values.
On the other hand, the exact value of the zero-th Fourier mode $I_0$ is real-valued since $I$ is so.
Therefore the imaginary part of $\mathcal{I}_0$ comes as errors in reconstruction.
It is reasonable to consider that the errors in the real and the imaginary part interact each other.
This observation leads a choice of $M$ so as to minimize the imaginary part of $I_0$,
which is expected to reduce the error in the real part efficiently.
Based on this consideration,
we call $M$ optimal which attains a \emph{local} minimum of the imaginary part
\begin{equation}\label{eq:error:imag}
E_{\text{imag}}
= \left\{\sum_{\tau \in \mathcal{T}}
  \bigl|\tau_{\ell}\bigr|
  \bigl\{\mut(z_{\ell})-2\pi\mus(z_{\ell})p_0(z_{\ell})\bigr\}^2
  \bigl(\Impart\mathcal{I}_0(z_{\ell})\bigr)^2
\right\}^{1/2},
\end{equation}
where
$|\tau_{\ell}|$ is the area of $\tau_{\ell}$ and
$\bigl\{\mut(z_{\ell})-2\pi\mus(z_{\ell})p_0(z_{\ell})\bigr\}\Impart\mathcal{I}_0(z_{\ell})$
corresponds to \emph{the imaginary part of reconstructed $q$} on $\tau_{\ell}$.
In order to obtain an optimal $M$, we reconstruct the source for several values of $M$,
then choose a value which minimizes $E_{\text{imag}}$.
We stress here  that the optimality indicator in \eqref{eq:error:imag}
does not require knowledge of the unknown source.

\section{Numerical Experiments}\label{sec:numstudy}

In this section we demonstrate the numerical feasibility of the proposed algorithm
for two numerical examples.
All computations are processed with IEEE754 double precision arithmetic.
In both numerical experiments, the measurement data is generated by
solving the forward problem ~\eqref{eq:forward}
by the piecewise-constant upwind approximation~\cite{fujiwaraFVM}.
Therefore it is natural to use the characteristic function as the interpolation basis
$\Psi_k$ in \Cref{sec:ComplexIntegral}, and
$\chi_k$ in \cref{step:boundary}.
The triangulation is generated by FreeFem++~\cite{FreeFemPP}.

The scattering coefficient is $\mus \equiv 5$. Physically,
It means that the particle scatters on average every $1/5$ unit of length.
Given that $D$ is the unit disc, particles scatter on average $10$ times before getting out.
We use the the two dimensional version of the Henyey-Greenstein scattering kernel in \eqref{HG}
\[
p(\xi\cdot\xi')
= \dfrac{1}{2\pi} \dfrac{1-g^2}{1-2g \xi\cdot\xi' + g^2}
\]
with the anisotropy parameter $g=1/2$. This choice if half way between the ballistic $g=1$
and isotropic $g=0$ case. The Fourier expansion of the scattering kernel in \eqref{fourierExpHG}
\[
\dfrac{1}{2\pi}\dfrac{1-g^2}{1-2g \cos\theta + g^2}
= \dfrac{1}{2\pi}\left(\sum_{m\in\Z} g^{|m|} e^{im\theta}\right),
\]
yields the simple form of the modes $p_m = g^m/2\pi$, for all $m\geq 0$.

In the discretization of the boundary, we adopt $S=128$,
while for the mid-point rule in the computations of the integral transforms, we use $100$ sampling points.


\begin{experiment}[\cite{FujiwaraSadiqTamasan}]
Let
\begin{align*}
R
&= (-0.25,0.5)\times(-0.15,0.15),
\\
B_1
&= \set{ (x_1,x_2) \sep (x_1-0.5)^2 + x_2^2 < 0.3^2 },
\\
B_2
&= \left\{ (x_1,x_2) \sep \left(x_1+0.25\right)^2 + \left(x_2-\dfrac{\sqrt{3}}{4}\right)^2 < 0.2^2 \right\}.
\end{align*}
be the rectangular, respectively circular subsets of $\D$ as shown in \cref{fig:inclusions}.

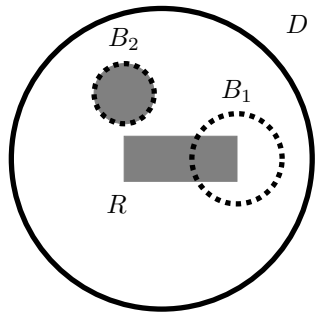
\begin{figure}[ht]
\centering
\begin{tikzpicture}[scale=2]
\draw [line width=2] (0,0) circle (1); 
  \node at (0.9,0.9) {$\D$};

\draw [color=gray,fill=gray,line width=0] (-0.25,0.433013) circle (0.2); 

\draw [color=gray,fill=gray,line width=0] (-0.25,-0.15) rectangle (0.5,0.15); 
  \node at (-0.3,-0.3) {$R$};

\draw [dotted,line width=2] (0.5,0) circle (0.3); 
  \node at (0.5,0.45) {$B_1$};

\draw [dotted,line width=2] (-0.25,0.433013) circle (0.2); 
    \node at (-0.25,0.8) {$B_2$};
\end{tikzpicture}
\caption{\label{fig:inclusions}
Locations of inclusions in numerical examples.
The medium is relatively strongly absorbing inside the dotted balls,
while the source $q(x)$ is located in the gray regions.
}
\end{figure}

The source, to be reconstructed, is 
\begin{equation}\label{eq:source}
q(x) = \begin{cases}
2,  \qquad & \text{in $R$};\\
1,  \qquad & \text{in $B_2$};\\
0,  \qquad & \text{otherwise}.
\end{cases}
\end{equation}
The absorption coefficient $\mua(x)$ is given by 
\[
\mua(x) = \begin{cases}
2, \quad &\text{in $B_1$};\\
1, \quad &\text{in $B_2$};\\
0.1, \quad &\text{otherwise}.
\end{cases}
\]

To generate the boundary data, we solve the forward problem using the numerical method in \cite{fujiwaraFVM}
with a triangular mesh of $5,542,718$ triangles,
and $360$ equispaced velocity intervals to describe the velocity directions.
We disregard the value $I(x,\xi)$ of the solution for $x\in D$, and only keep the boundary values.
The obtained boundary data $I(x,\xi)$ on $\partial\D\times S^1$
is depicted in \cref{fig:example1:boundary}.
In this figure, for $x \in \partial\D$ indicated by cross symbols $(\times)$,
the graph of $\bigl(I(\zeta,\xi), \xi\bigr)$, $\xi \in S^1$ are
shown by a red (closed) curve in the polar coordinate with the center at each $\zeta$.
In other words, the red curve is the graph of
$\set{ \zeta + 2I(\zeta,\xi)\xi \sep \xi \in S^1}$
for $\zeta \in \partial\D$ indicated by the cross symbols.
By the assumption of no incoming radiation, i.e.\ $I|_{\Gamma_{-}} = 0$,
the red curve never appear inside $|x| < 1$ indicated by gray in this expression.
\begin{figure}[ht]
\begin{minipage}{.6\textwidth}
\centering
\includegraphics[width=.8\textwidth]{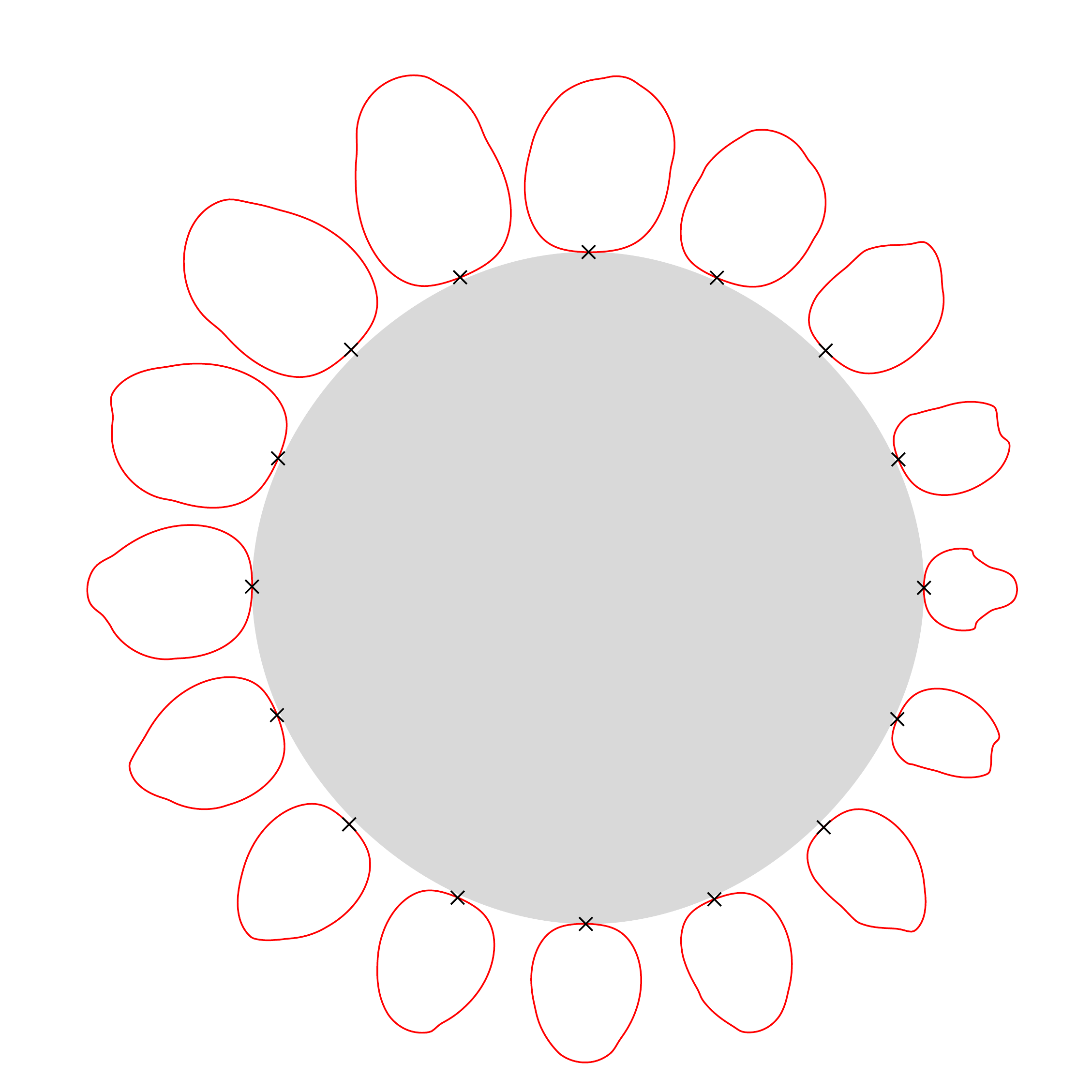}
\end{minipage}
\hfill
\begin{minipage}{.38\textwidth}
\centering
\includegraphics[width=\textwidth]{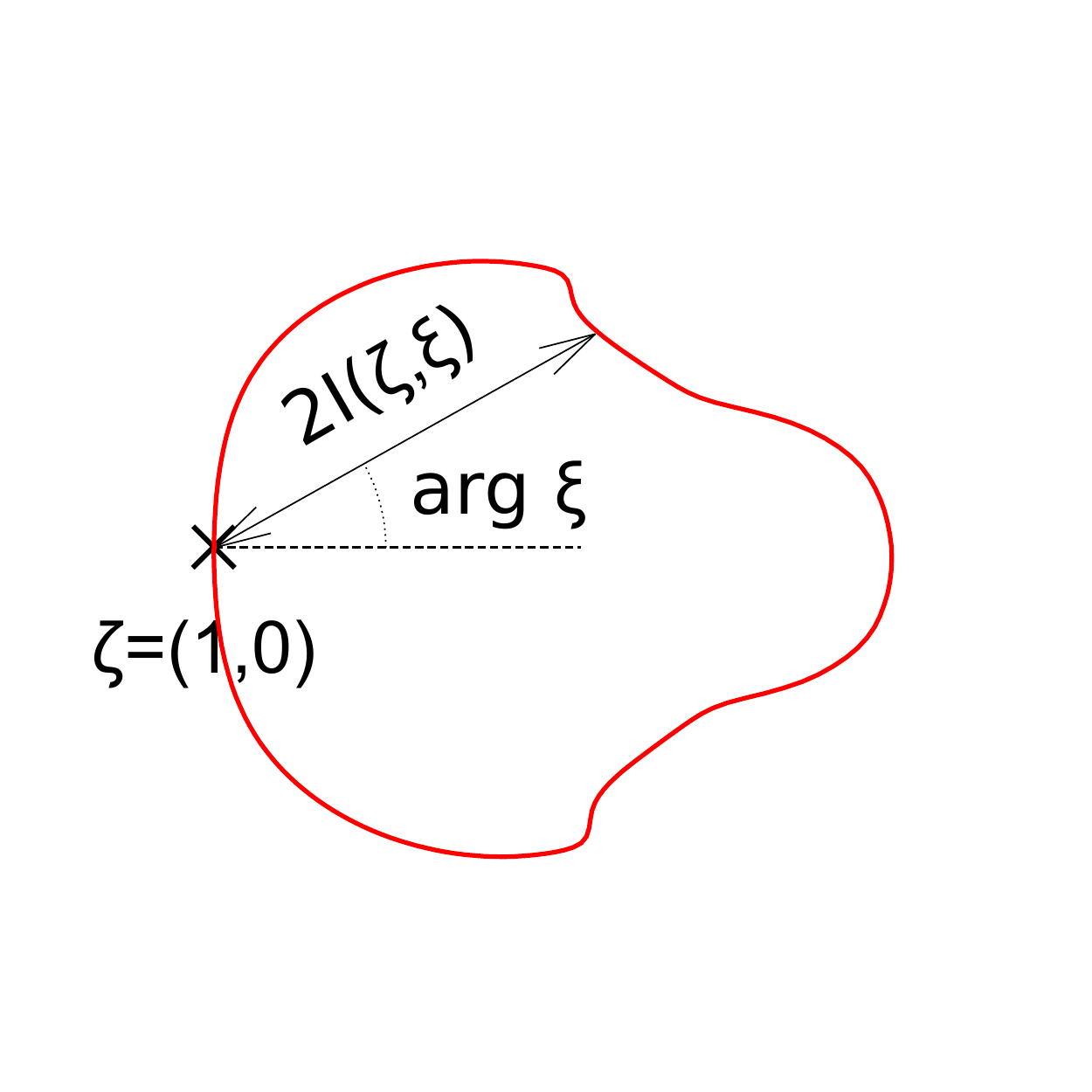}
\end{minipage}
\caption{\label{fig:example1:boundary}Boundary measurement $I(\zeta,\xi)|_{\partial\D\times S^1}$
obtained by the numerical computation of the forward problem in the unit disc (in grey). For $\zeta\in\partial\D$
(indicated by $\times$), the red curve is $\set{\zeta + 2 I(\zeta,\xi)\xi \sep \xi \in S^1}$ (on the left).
The right figure is a magnification of the curve at $\zeta=(1,0)$.
}
\end{figure}
\begin{figure}[ht]
\begin{minipage}{.41\textwidth}
\begin{tikzpicture}

\draw (0,0) circle (1.5);
\draw [->] (2.5,-2.5) -- (2.5,2.5);
  \node [left] at (2.5,2.5) {$\xip$};
\draw [dotted,line width=1.5pt] (0,0) -- (2.5,0);
  \node at (2.8,-0.3) {$O$};

\draw [->] (0,0) -- +(45:1.4);
  \node [left] at (0.5,0.7) {$\zeta$};

\draw [<->] (2.7,0) -- (2.7,1.06);
\node [right] at (2.7,0.5) {$\zeta\cdot\xip$};

  \foreach \t in {30,60,...,180}
    \draw [->] (90:1.55) -- +(\t:0.5);
  \draw [->,color=red,line width=1.2pt] (90:1.55) -- +(0:0.5);
  \draw [dotted,line width=1.2pt] (90:1.55)+(0.5,0) -- (2.5,1.5);

  \foreach \t in {-30,30,60,90,120}
    \draw [->] (45:1.55) -- +(\t:0.5);
  \draw [->,color=red,line width=1.2pt] (45:1.55) -- +(0:0.5);
  \draw [dotted,line width=1.2pt] (45:1.55)+(0.5,0) -- (2.5,1.06);

  \foreach \t in {-120,-90,-60,-30,30}
    \draw [->] (-45:1.55) -- +(\t:0.5);
  \draw [->,color=red,line width=1.2pt] (-45:1.55) -- +(0:0.5);
  \draw [dotted,line width=1.2pt] (-45:1.55)+(0.5,0) -- (2.5,-1.06);

\end{tikzpicture}
\end{minipage}
\begin{minipage}{.58\textwidth}
\includegraphics[width=\textwidth]{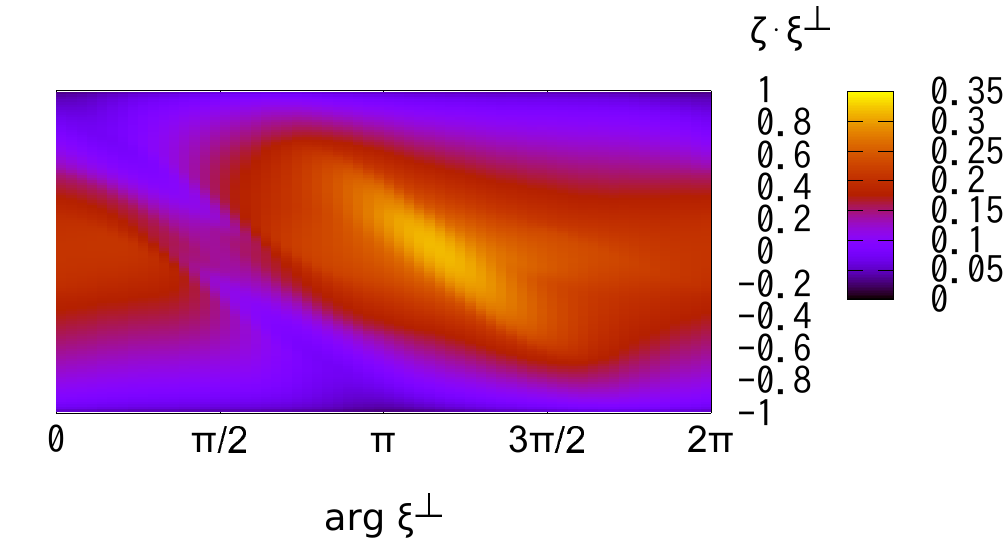}
\end{minipage}
\caption{\label{fig:example1:sinogram}
Left : Projection of the measurement data $I(\zeta,\xi)|_{\Gamma_{+}}$.
The red arrows, $I(\zeta,\xi)$ with $\xi=(1,0)$, are projected to the plane with $\Arg\xip=\pi/2$.
Right : Projection of $I|_{\Gamma_{+}}$ corresponding to \cref{fig:example1:boundary},
the horizontal axis is the argument of the projection plane,
and the vertical axis is the distance from the projected origin.
}
\end{figure}


To better exhibit the effect of scattering, in the second representation of the data $I\big|_{\Gamma_{+}}$
in \cref{fig:example1:sinogram} we use the same coordinates as those used in a classical sinogram
for the Radon transform data.
More precisely, if $\xip \in S^1$ denote the counterclockwise rotation of $\xi$ by $\pi/2$,
then the horizontal axis is the argument of the projection plane with direction $\xip$,
while the vertical axis is the distance from the projected origin.
In the absence of scattering, $p=0$, this representation would be exactly the sinogram
for the attenuated Radon transform data.
Note in \cref{fig:example1:sinogram} how the scattering had combined
and smeared out the features from the three locations of the source.

To avoid an inverse crime, the triangulation used in the reconstruction is different from that in the forward problem.
In particular the reconstruction mesh consists of $6,998$ triangles with $5,400$ vertices
(much less than the $5,542,718$ triangles used in the forward problem),
and is generated without any information of the location of the subsets $R$, $B_1$, and $B_2$.
The computational time for reconstruction with $M=6$ is $340$ seconds
on Xeon E5-2650 v4 (2.2GHz, 12 cores) with OpenMP.
Almost all computational time are occupied by the
computation of the discrete Fourier transform of $e^{-h}$ and $e^{h}$ in \cref{step:IntegratingFactor},
and the boundary integral \eqref{eq:BukhgeimCauchyNum} in \cref{step:BoundaryIntegral},
as $117$ seconds, $79$ seconds, and $141$ seconds respectively.

The reconstructed $q(x)$ with $M=6$ is shown in \cref{fig:disconta:reconst}.
Its cross sections along the dotted diameters $x_2=-\sqrt{3}x_1$ and $x_2 = 0$,
passing through the origin and the center of $B_2$ and $R$ respectively,
are depicted in \cref{fig:disconta:reconst:section}. 
The reconstructed $q(x)$ shows a quantitative agreement with the exact source in \eqref{eq:source}.
Similar to the X-ray and attenuated X-ray tomography, the artifacts appear due to the co-normal singularities
in the source but also in the attenuation. 

\begin{figure}[ht]
\centering
\begin{minipage}{.52\textwidth}
\includegraphics[width=\textwidth]{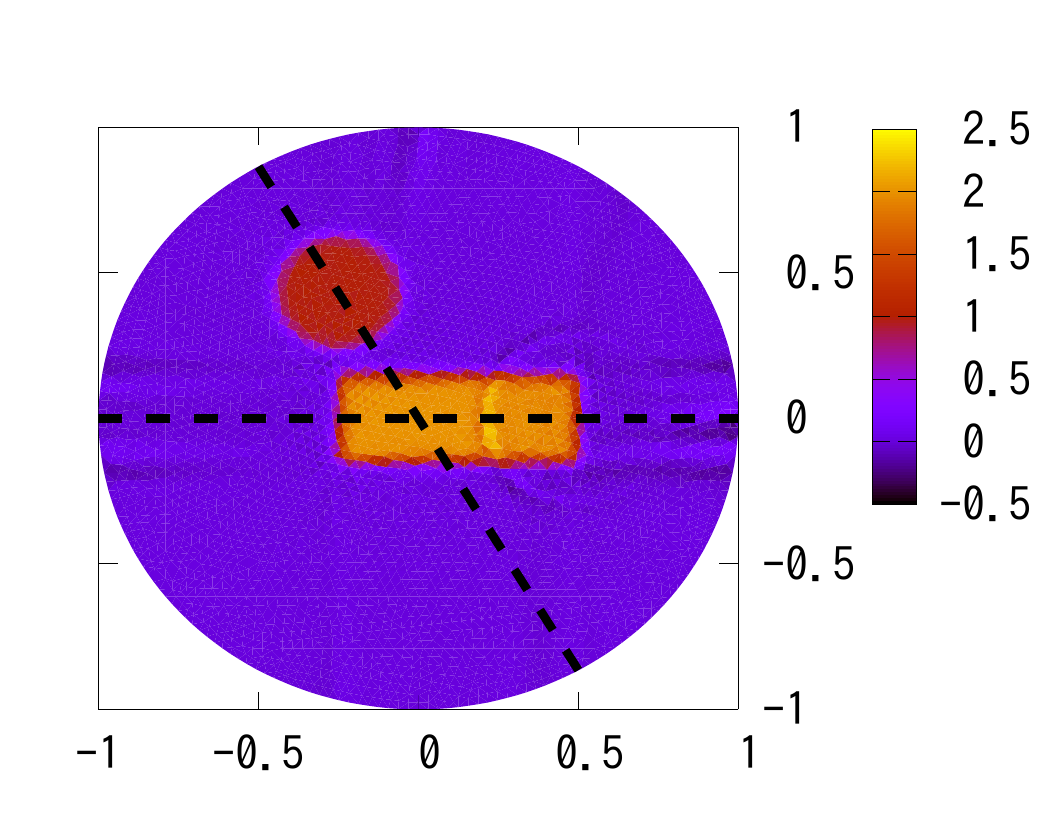}
\end{minipage}
\caption{\label{fig:disconta:reconst} Reconstructed source with $M=6$}
\end{figure}
\begin{figure}
\begin{minipage}{.43\textwidth}
\centering
\includegraphics[width=\textwidth]{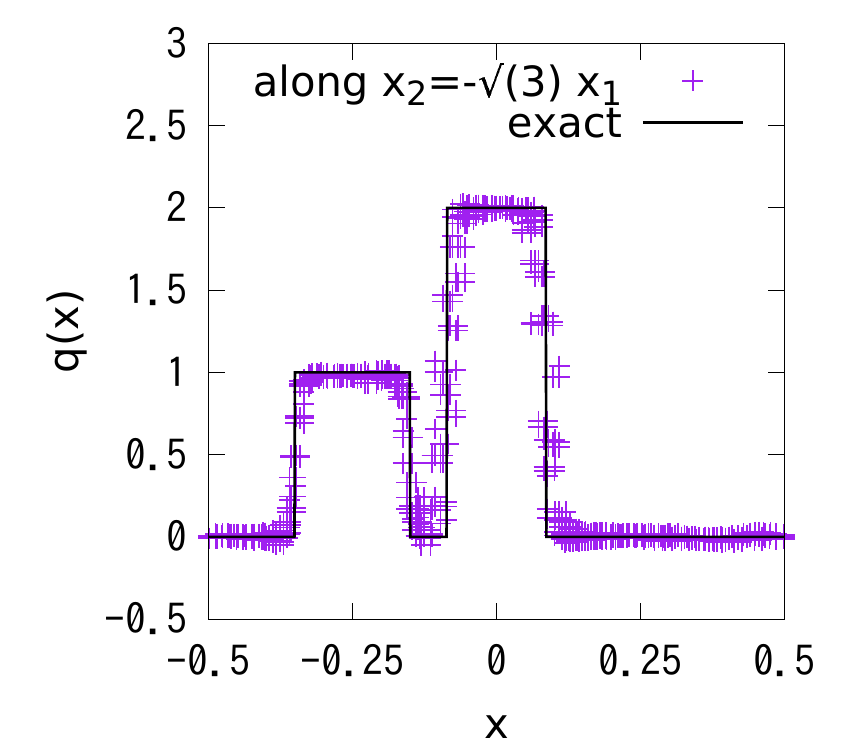}
\end{minipage}
\begin{minipage}{.43\textwidth}
\centering
\includegraphics[width=\textwidth]{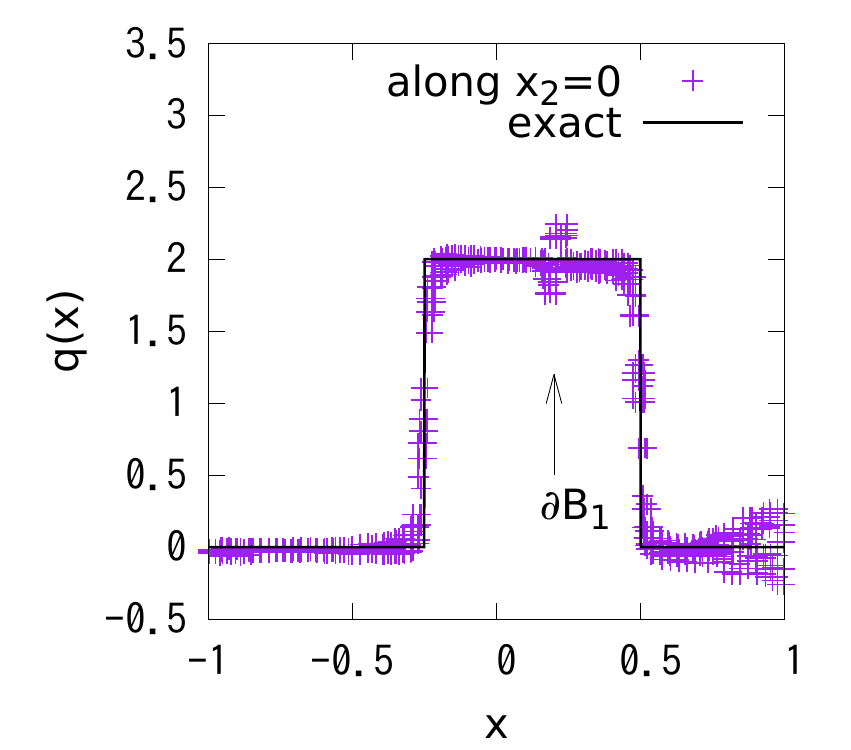}
\end{minipage}
\caption{\label{fig:disconta:reconst:section} 
Section of the reconstructed source along the dotted lines in \cref{fig:disconta:reconst},
 $|x_2+\sqrt{3}x_1| < 0.05$ (on the left) and $|x_2|<0.05$ (on the right) respectively.
The arrow on the right figure indicate $\partial B_1$ where the scattering coefficient is discontinuous.
}
\end{figure}

\begin{figure}[ht]
\centering
\includegraphics[width=.6\textwidth]{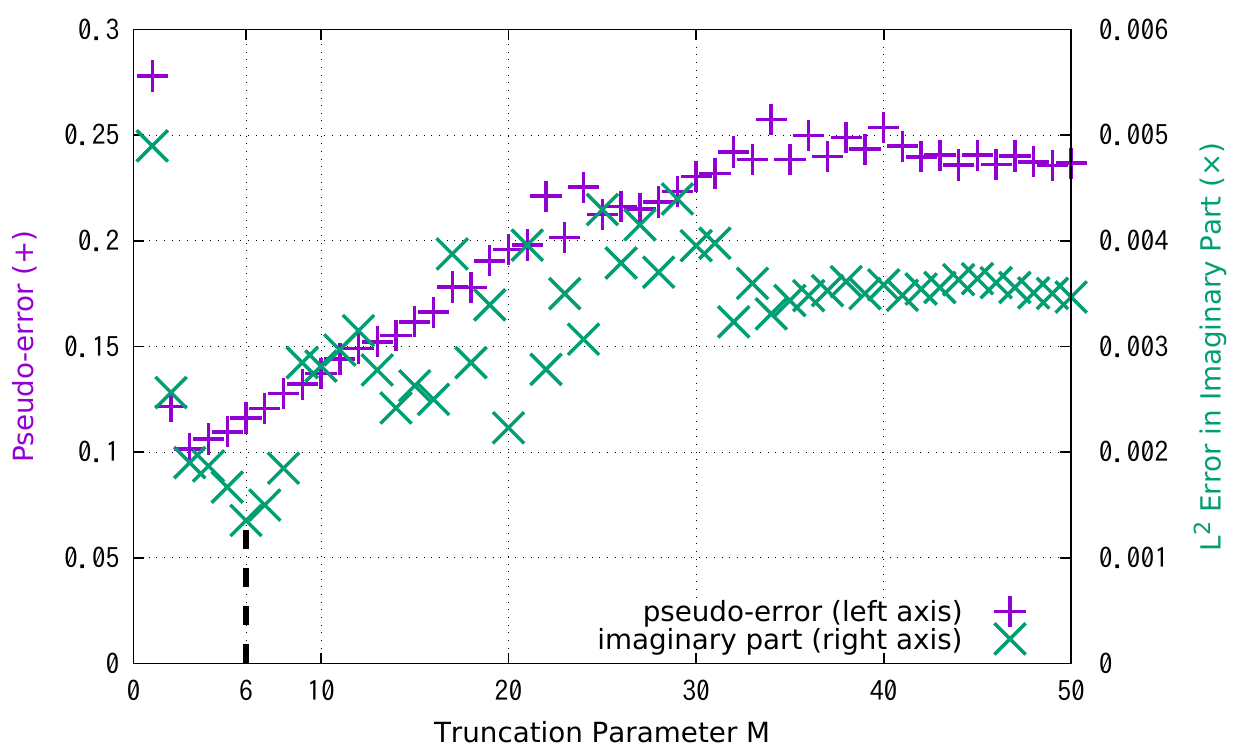}
\caption{\label{fig:example1:error}The truncation parameter $M$ and pseudo-errors in reconstructed source ($+$, left axis)
and corresponding imaginary part ($\times$, right axis) in Experiment 1.}
\end{figure}
\cref{fig:example1:error} shows
the relation between $M$, errors in the corresponding imaginary part \eqref{eq:error:imag} (computed independent of $q$),
and pseudo-errors (this require knowledge of $q$) in the reconstructed $q$ defined by
\begin{equation}\label{eq:error:real}
\left\{\sum_{\text{$\tau_{\ell}$ : $q$ is continuous}} |\tau_{\ell}| |q(z_{\ell}) - q_{\ell}|^2\right\}^{1/2},
\end{equation}
where $|\tau_{\ell}|$ is the area of $\tau_{\ell}$, $z_{\ell}$ is the center of the triangle $\tau_{\ell}$,
and the summation runs over the triangles where $q$ is continuous (in particular, constant in the example).
From the figures, the error in imaginary part scaled on the right axis
is sufficiently smaller the pseudo-error scaled on the left axis.
Both errors take minimum around $M=5$, and increase after that.
More precisely, the error of the imaginary part \eqref{eq:error:imag} is minimum at $M=6$,
while the pseudo-error of reconstructed $q$ is minimum at $M=3$.
According to the criteria stated before, we adopt $M=6$,
which causes $|p_{M}|< 2.49\times10^{-3}$ in the setting.

\end{experiment}

\begin{experiment}
Let us consider the situation that $\mua$ is given by the modified Shepp-Logan phantom~\cite{modifiedSL},
which occupies the ellipse
\[
\D = \left\{(x_1,x_2) \sep \dfrac{x_1^2}{a^2} + \dfrac{x_2^2}{b^2} < 1 \right\},
\quad a = 0.69, b = 0.92.
\]

\begin{figure}[ht]
\centering
\begin{tikzpicture}[scale=2.5]
\draw [color=gray,fill=gray] (-0.2,-0.705) rectangle (0.2,-0.505); 
  \node at (0.26,-0.605) {\scalebox{1}{$R$}};
\draw [color=gray,fill=gray] (-0.4,0) circle (0.1);
  \node at (-0.53,-0.15) {\scalebox{1}{$Q_1$}};
\draw [color=gray,fill=gray] (0.22,0) circle (0.05);
  \node at (0.35,-0.1) {\scalebox{1}{$Q_2$}};

\draw (0,0) ellipse (0.69 and 0.92); 
\draw (0,-0.0184) ellipse (0.6624 and 0.874); 
\draw [rotate=-18] (0.2092,0.06798) ellipse (0.11 and 0.31); 
\draw [rotate=18] (-0.2092,0.06798) ellipse (0.16 and 0.41); 
\draw (0,0.35) ellipse (0.21 and 0.25); 
\draw (0,0.1) ellipse (0.046 and 0.046); 
\draw (0,-0.1) ellipse (0.046 and 0.046); 
\draw (-0.08,-0.605) ellipse (0.046 and 0.023); 
\draw (0, -0.605) ellipse (0.023 and 0.023); 
\draw (0.06,-0.605) ellipse (0.023 and 0.046); 

\end{tikzpicture}
\caption{\label{fig:SLandQ} Discontinuous interface of $\mua$ (solid curves)
in the Shepp-Logan phantom, and the locations of support of the source $q$ (gray)}
\end{figure}
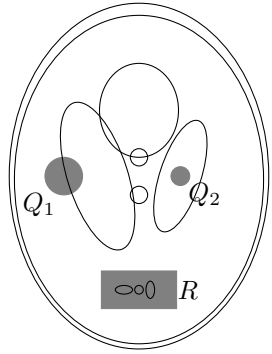

We consider three disjoint domains in $D$ (\cref{fig:SLandQ}):
\begin{align*}
Q_1 &= \set{(x_1+0.4)^2 + x_2^2 < 0.1^2},\\
Q_2 &= \set{(x_1-0.22)^2 + x_2^2 < 0.05^2},\\
R   &= \set{-0.2 < x_1 < 0.2, -0.705 < x_2 < -0.505},
\end{align*}
and the internal source is given by
\[
q(x) = \begin{cases}
2, \qquad & \text{in $Q_1\cup Q_2$};\\
1, \qquad & \text{in $R$};\\
0, \qquad & \text{otherwise}.
\end{cases}
\]

\begin{figure}[ht]
\begin{minipage}{.35\textwidth}
\includegraphics[width=\textwidth]{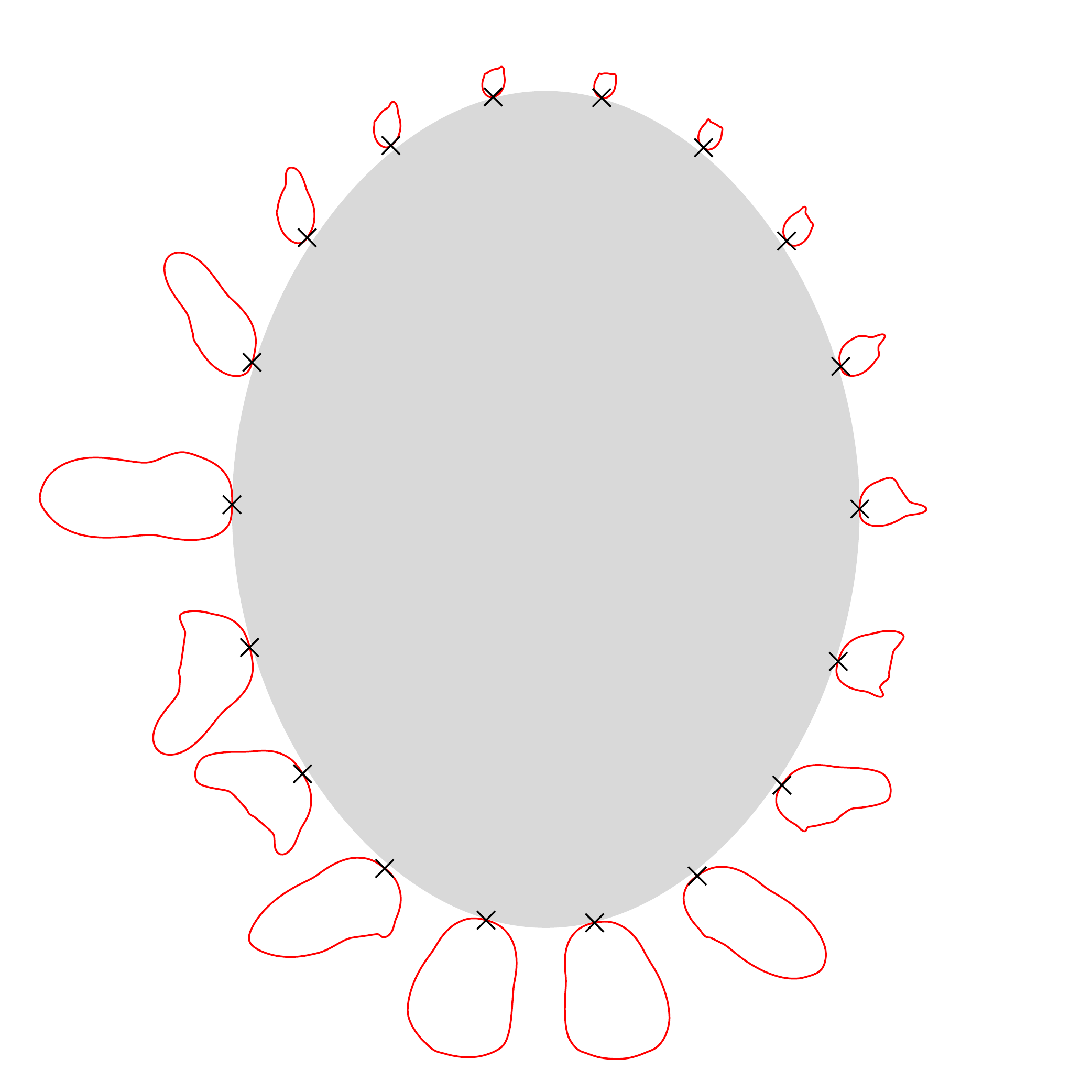}
\end{minipage}
\hfill
\begin{minipage}{.58\textwidth}
\includegraphics[width=\textwidth]{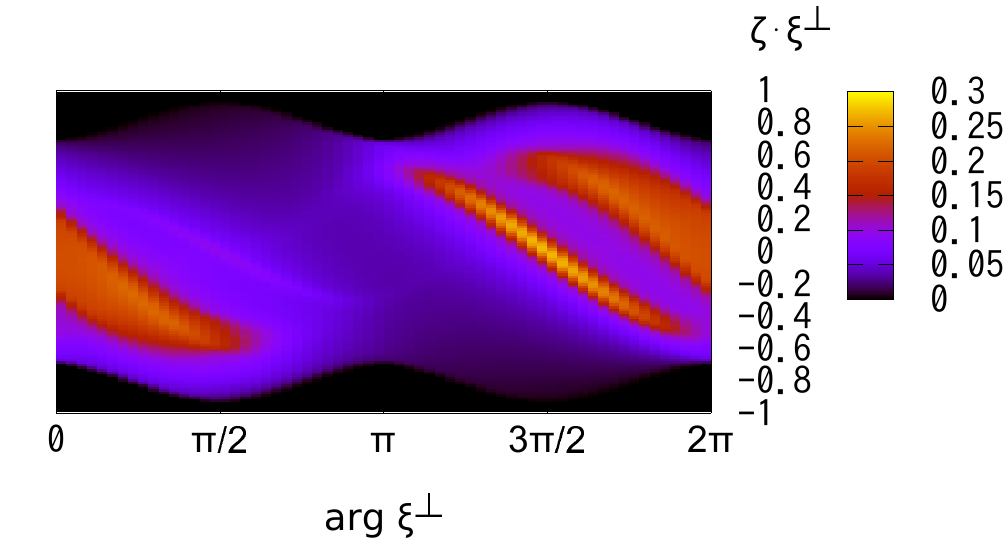}
\end{minipage}
\caption{\label{fig:SL-bd} Measurement data on $\partial\D$ is shown by red curves,
and the ellipse domain $D$ is filled by gray (on the left),
projection of measurement data (on the right)}
\end{figure}
The boundary measurement is generated by solving the forward problem
with $1,554,282$ triangles and $360$ velocity directions.
The numbers of measurement points are $K=3,000$ on $\partial\D$, and $N=360$ on $S^1$.
On the contrary, the reconstruction mesh consists of $6,010$ triangles with $3,106$ vertices.
Similarly as the previous experiment,
the latter mesh is generated without any information of $\mua$ and $q$.
The boundary nodes on the ellipse generated by FreeFem++ are not
equi-spaced with respect to their angles.

\begin{figure}[ht]
\centering
\begin{minipage}{.58\textwidth}
\includegraphics[width=\textwidth]{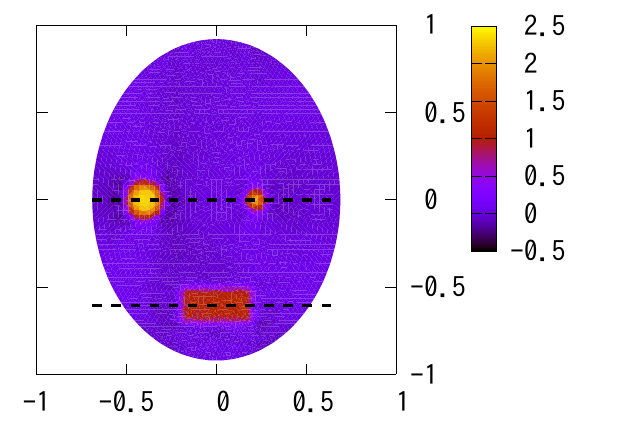}
\end{minipage}
\caption{\label{fig:SL}Numerical reconstruction of the source $q(x)$ (real part) on $\D$ with $M=8$ in Experiment 2.}
\end{figure}
\begin{figure}[ht]
\begin{minipage}{.48\textwidth}
\includegraphics[width=\textwidth]{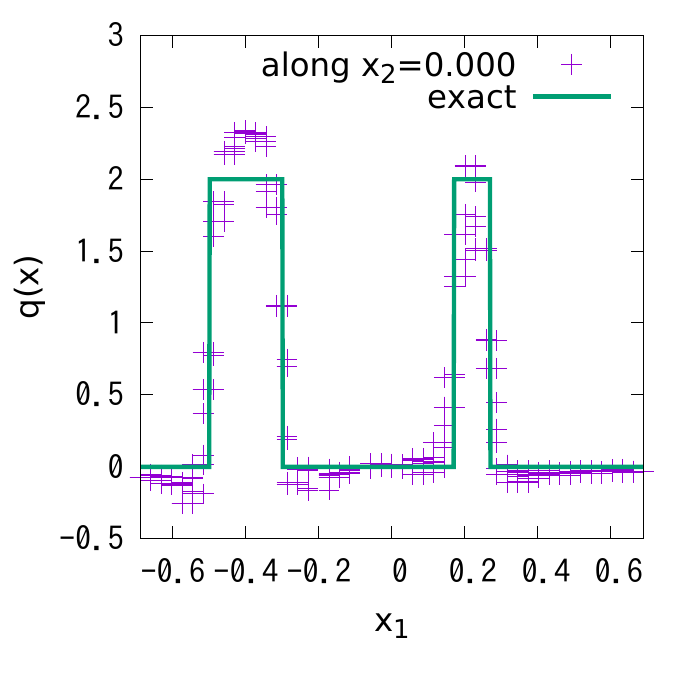}
\end{minipage}
\hfill
\begin{minipage}{.48\textwidth}
\includegraphics[width=\textwidth]{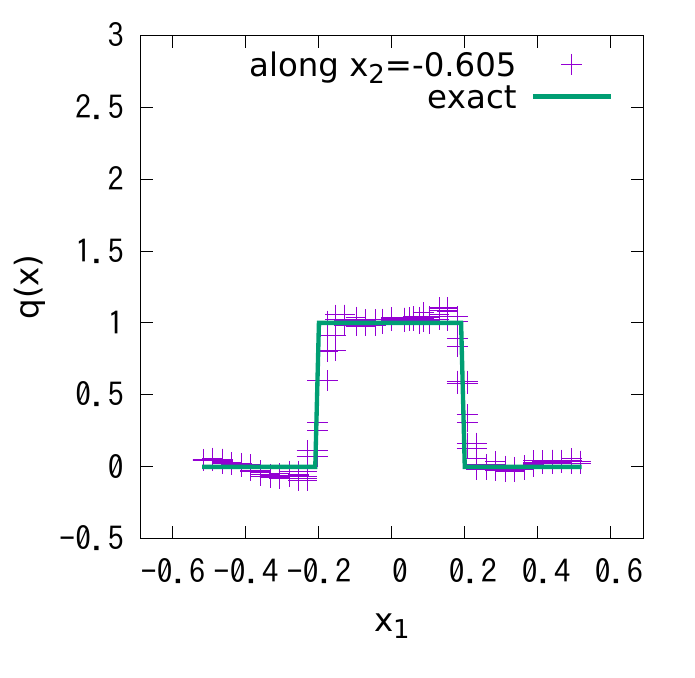}
\end{minipage}
\caption{\label{fig:SL:sectM8}Cross sections of the reconstructed source $q(x)$ with $M=8$
on the dotted lines in \cref{fig:SL},
$|x_2| < 0.01$ (on left) and $|x_2+0.605|< 0.01$ (on right).}
\end{figure}
\cref{fig:SL} depicts the reconstructed $q(x)$ on $\D$ with $M=8$,
while \cref{fig:SL:sectM8} is its sections on the dotted lines.
The computational time for reconstruction with $M=8$ is $247$ seconds
on Xeon E5-2650 v4 (2.2GHz, 12 cores) with OpenMP.
From the results, the support of $q$ is clearly and quantitatively reconstructed,
while the profile of $\mua$ does not appear.

\begin{figure}[ht]
\centering
\includegraphics[width=.6\textwidth]{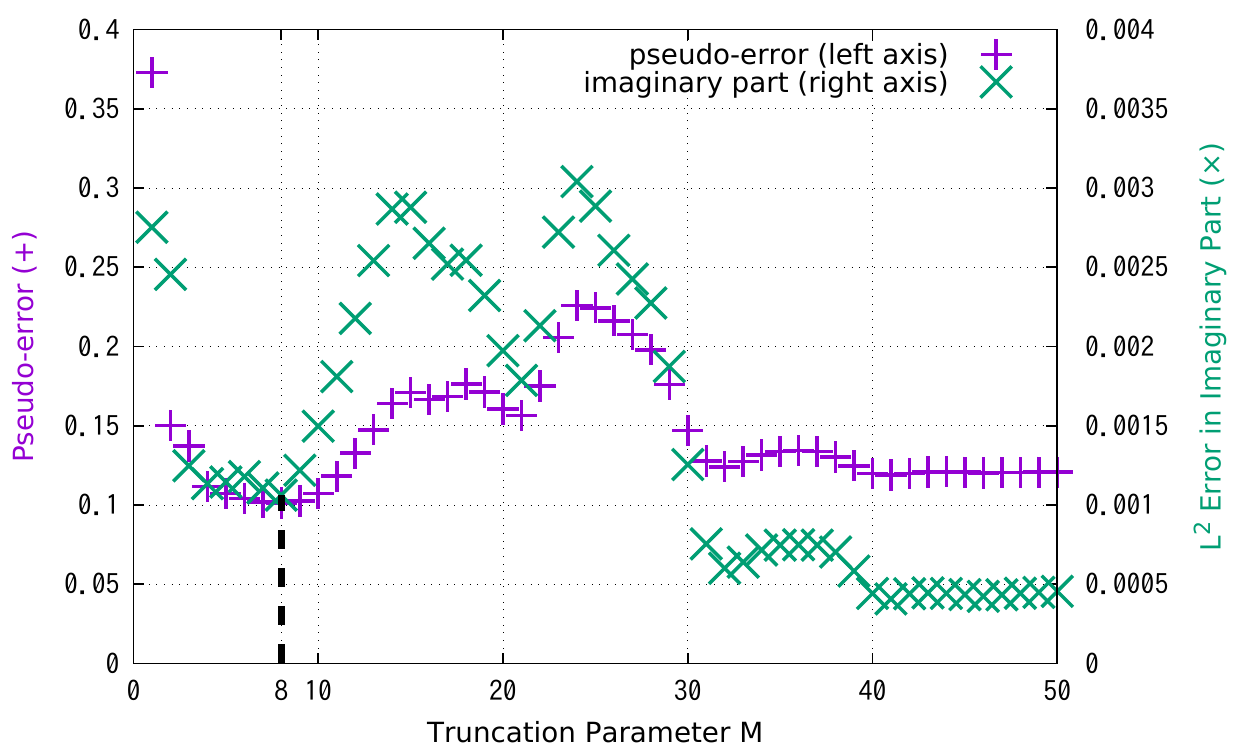}
\caption{\label{fig:SL:error}The truncation parameter $M$ and pseudo-errors
in reconstructed source ($+$, left axis) and corresponding imaginary part ($\times$, right axis).}
\end{figure}
The pseudo-errors in the reconstructed source and
the errors of the corresponding imaginary part are shown in \cref{fig:SL:error}.
Our proposed optimality criterion yields to choose $M=8$ and $M=40$ as reasonable orders of truncation.

\begin{figure}[ht]
\begin{minipage}{.48\textwidth}
\includegraphics[width=\textwidth]{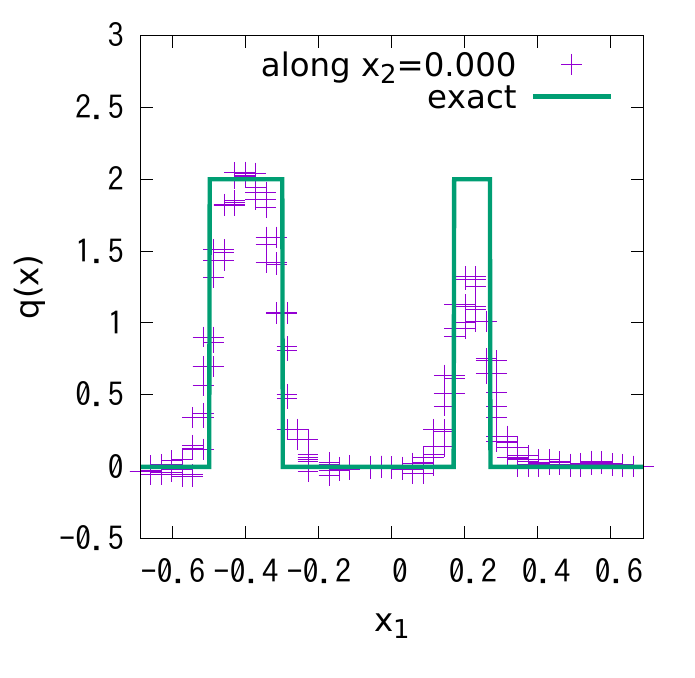}
\end{minipage}
\hfill
\begin{minipage}{.48\textwidth}
\includegraphics[width=\textwidth]{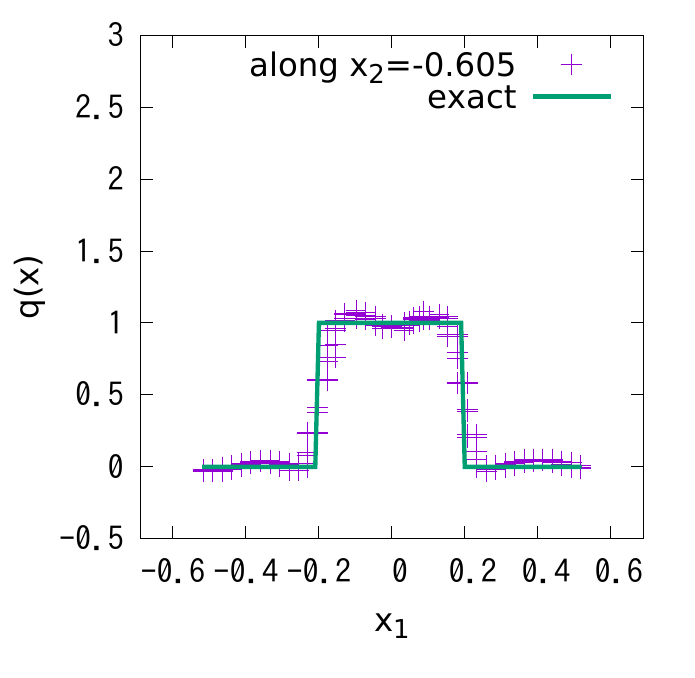}
\end{minipage}
\caption{\label{fig:SL:sectM40}Cross sections of the reconstructed source $q(x)$ with $M=40$
on the dotted lines in \cref{fig:SL},
$|x_2| < 0.01$ (on left) and $|x_2+0.605|< 0.01$ (on right).}
\end{figure}
\cref{fig:SL:sectM40} shows sections of the reconstructed $q(x)$
with $M=40$ on the dotted lines in \cref{fig:SL}.
Although the value of \eqref{eq:error:imag} at $M=40$ is smaller than that at $M=8$, 
the peak of the source in $Q_2$ is not obtained well with $M=40$.
The possible reason for this fact is that the size of $Q_2$
is small relative to the size of the triangular mesh, yielding that
the discrete $L^2$-norm used in computing the pseudo-error
in \eqref{eq:error:real}  be less effective.

\end{experiment}
In general, the choice of an optimal truncation parameter $M$ is not clear.
However, our algorithm includes an optimality criterion
which is independent of the knowledge of the source,
thus making it feasible. The numerical experiments
based on this choice were shown to produce accurate reconstructions.


\section*{Acknowledgment}
The work of H.~Fujiwara was supported by JSPS KAKENHI Grant Numbers 16H02155, 18K18719, and 18K03436.
The work of K.~Sadiq    was supported by the Austrian Science Fund (FWF), Project P31053-N32. 
The work of A.~Tamasan  was supported in part by the NSF grant DMS-1907097.
 

\begin{thebibliography}{10}

\bibitem{anikonov02}
{\sc D.~S. Anikonov, A.~E. Kovtanyuk, and I.~V. Prokhorov}, {\em Transport
  equation and tomography}, vol.~30 of Inverse and Ill-posed Problems Series,
  VSP, Utrecht, 2002.

\bibitem{ABK}
{\sc E.~V. Arbuzov, A.~L. Bukhge\u{\i}m, and S.~G. Kazantsev}, {\em
  Two-dimensional tomography problems and the theory of {$A$}-analytic
  functions [translation of {\it {a}lgebra, geometry, analysis and mathematical
  physics ({r}ussian) ({n}ovosibirsk, 1996)}, 6--20, 189, {I}zdat. {R}oss.
  {A}kad. {N}auk {S}ibirsk. {O}tdel. {I}nst. {M}at., {N}ovosibirsk, 1997]},
  Siberian Adv. Math., 8 (1998), pp.~1--20.

\bibitem{arridge}
{\sc S.~R. Arridge}, {\em Optical tomography in medical imaging}, Inverse
  Problems, 15 (1999), pp.~R41--R93.

\bibitem{bal04}
{\sc G.~Bal}, {\em On the attenuated {R}adon transform with full and partial
  measurements}, Inverse Problems, 20 (2004), pp.~399--418.

\bibitem{balTamasan07}
{\sc G.~Bal and A.~Tamasan}, {\em Inverse source problems in transport
  equations}, SIAM J. Math. Anal., 39 (2007), pp.~57--76.

\bibitem{bomanStromberg}
{\sc J.~Boman and J.-O. Str\"{o}mberg}, {\em Novikov's inversion formula for
  the attenuated {R}adon transform---a new approach}, J. Geom. Anal., 14
  (2004), pp.~185--198.

\bibitem{bukhgeimBook}
{\sc A.~L. Bukhgeim}, {\em Inversion formulas in inverse problems}, in Linear
  Operators and Ill-Posed Problems by M.\ M.\ Lavrentiev and L.\ Ya.\ Savalev,
  Plenum, New York,  (1995), pp.~323--378.

\bibitem{choulliStefanov96}
{\sc M.~Choulli and P.~Stefanov}, {\em Inverse scattering and inverse boundary
  value problems for the linear {B}oltzmann equation}, Comm. Partial
  Differential Equations, 21 (1996), pp.~763--785.

\bibitem{choulliStefanov99}
{\sc M.~Choulli and P.~Stefanov}, {\em An inverse boundary value problem for
  the stationary transport equation}, Osaka J. Math., 36 (1999), pp.~87--104.

\bibitem{dautrayLions4}
{\sc R.~Dautray and J.-L. Lions}, {\em Mathematical analysis and numerical
  methods for science and technology. {V}ol. 4}, Springer-Verlag, Berlin, 1990.
\newblock Integral equations and numerical methods, With the collaboration of
  Michel Artola, Philippe B\'{e}nilan, Michel Bernadou, Michel Cessenat,
  Jean-Claude N\'{e}d\'{e}lec, Jacques Planchard and Bruno Scheurer, Translated
  from the French by John C. Amson.

\bibitem{ErnGuermond}
{\sc A.~Ern and J.-L. Guermond}, {\em Theory and practice of finite elements},
  vol.~159 of Applied Mathematical Sciences, Springer-Verlag, New York, 2004.

\bibitem{finch}
{\sc D.~V. Finch}, {\em The attenuated x-ray transform: recent developments},
  in Inside out: inverse problems and applications, vol.~47 of Math. Sci. Res.
  Inst. Publ., Cambridge Univ. Press, Cambridge, 2003, pp.~47--66.

\bibitem{fujiwaraFVM}
{\sc H.~Fujiwara}, {\em Piecewise constant upwind approximations to the
  stationary radiative transport equation}.
\newblock accepted in Proceedings of International Conference Continuum
  Mechanics Focusing on Singularities 2018.

\bibitem{FujiwaraSadiqTamasan}
{\sc H.~Fujiwara, K.~Sadiq, and A.~Tamasan}, {\em A {F}ourier approach to the
  inverse source problem in an absorbing and non-weakly scattering medium}.
\newblock under review.

\bibitem{FreeFemPP}
{\sc F.~Hecht}, {\em New development in freefem++}, J. Numer. Math., 20 (2012),
  pp.~251--265.

\bibitem{helgasonBook}
{\sc S.~Helgason}, {\em The {R}adon transform}, vol.~5 of Progress in
  Mathematics, Birkh\"{a}user, Boston, Mass., 1980.

\bibitem{hubenthal}
{\sc M.~Hubenthal}, {\em An inverse source problem in radiative transfer with
  partial data}, Inverse Problems, 27 (2011), pp.~125009, 22.

\bibitem{kuchmentBook}
{\sc P.~Kuchment}, {\em The {R}adon transform and medical imaging}, vol.~85 of
  CBMS-NSF Regional Conference Series in Applied Mathematics, Society for
  Industrial and Applied Mathematics (SIAM), Philadelphia, PA, 2014.

\bibitem{kunyansky01}
{\sc L.~A. Kunyansky}, {\em A new {SPECT} reconstruction algorithm based on the
  {N}ovikov explicit inversion formula}, Inverse Problems, 17 (2001),
  pp.~293--306.

\bibitem{larsen75}
{\sc E.~W. Larsen}, {\em The inverse source problem in radiative transfer}, J.
  Quant. Spectrosc. Radiat. Transfer, 15 (1975), pp.~1--5.

\bibitem{mccormickSanchez}
{\sc N.~J. McCormick and R.~Sanchez}, {\em Solutions to an inverse problem in
  radiative transfer with polarization--{II}}, J. Quant. Spectrosc. Radiat.
  Transfer, 30 (1983), pp.~527 -- 535.

\bibitem{mokhtar}
{\sc M.~Mokhtar-Kharroubi}, {\em Mathematical topics in neutron transport
  theory}, Series on Advances in Mathematics for Applied Sciences, World
  Scientific, Singapore, 1997.

\bibitem{monard16}
{\sc F.~Monard}, {\em Efficient tensor tomography in fan-beam coordinates},
  Inverse Probl. Imaging, 10 (2016), pp.~433--459.

\bibitem{monard17}
{\sc F.~Monard}, {\em Efficient tensor tomography in fan-beam coordinates.
  {II}: {A}ttenuated transforms}, Inverse Probl. Imaging, 12 (2018),
  pp.~433--460.

\bibitem{monardBalPreprint}
{\sc F.~Monard and G.~Bal}, {\em Inverse source problems in transport via
  attenuated tensor tomography},
  \url{https://arxiv.org/abs/arXiv:1908.06508v1}.

\bibitem{nattererBook}
{\sc F.~Natterer}, {\em The mathematics of computerized tomography}, B. G.
  Teubner, Stuttgart; John Wiley \& Sons, Ltd., Chichester, 1986.

\bibitem{natterer01}
{\sc F.~Natterer}, {\em Inversion of the attenuated {R}adon transform}, Inverse
  Problems, 17 (2001), pp.~113--119.

\bibitem{novikov01}
{\sc R.~G. Novikov}, {\em Une formule d'inversion pour la transformation d'un
  rayonnement {X} att\'{e}nu\'{e}}, C. R. Acad. Sci. Paris S\'{e}r. I Math.,
  332 (2001), pp.~1059--1063.

\bibitem{NW_OMI}
{\sc V.~Ntziachristos and R.~Weissleder}, {\em Experimental three-dimensional
  fluorescence reconstruction of diffuse media by use of a normalized {B}orn
  approximation}, Opt. Lett., 26 (2001), pp.~893--895.

\bibitem{NYS_OMI}
{\sc V.~Ntziachristos, A.~G. Yodh, M.~Schnall, and B.~Chance}, {\em Concurrent
  {MRI} and diffuse optical tomography of breast after indocyanine green
  enhancement}, Proc.\ Natl.\ Acad.\ Sci.\ USA, 97 (2000), pp.~2767--2772.

\bibitem{radon1917}
{\sc J.~Radon}, {\em \"{U}ber die bestimmung von funktionen durch ihre
  integralwerte l\"{a}ngs gewisser mannigfaltigkeiten}, Berichte S\"{a}chsische
  Akademie der Wissenschaften zu Leipzig, Math.-Phys. Kl., 69 (1917),
  pp.~262--277.
\newblock (translated : \textit{On the determination of functions from their
  integral values along certain maniforlds}, in IEEE Trans.\ Med.\ Imaging,
  MI-5 (1986), pp.~170--176.).

\bibitem{sadiqTamasan02}
{\sc K.~Sadiq and A.~Tamasan}, {\em On the range characterization of the
  two-dimensional attenuated {D}oppler transform}, SIAM J. Math. Anal., 47
  (2015), pp.~2001--2021.

\bibitem{sadiqTamasan01}
{\sc K.~Sadiq and A.~Tamasan}, {\em On the range of the attenuated {R}adon
  transform in strictly convex sets}, Trans. Amer. Math. Soc., 367 (2015),
  pp.~5375--5398.

\bibitem{siewert93}
{\sc C.~E. Siewert}, {\em An inverse source problem in radiative transfer}, J.
  Quant. Spectrosc. Radiat. Transfer, 50 (1993), pp.~603--609.

\bibitem{stefanovUhlmann08}
{\sc P.~Stefanov and G.~Uhlmann}, {\em An inverse source problem in optical
  molecular imaging}, Anal. PDE, 1 (2008), pp.~115--126.

\bibitem{modifiedSL}
{\sc P.~A. Toft}, {\em The {R}adon Transform --- Theory and Implementation},
  PhD thesis, Technical University of Denmark, 1996.

\end{thebibliography}

\end{document}